\DeclareMathAlphabet{\mathpzc}{OT1}{pzc}{m}{it}
\newcommand{\vertiii}[1]{{\left\vert\kern-0.25ex\left\vert\kern-0.25ex\left\vert #1 
    \right\vert\kern-0.25ex\right\vert\kern-0.25ex\right\vert}}
\newtheorem{theorem}{Theorem}[section]
\theoremstyle{plain}
\theoremstyle{plain}
\newtheorem{remark}{Remark}[section]
\theoremstyle{plain}
\newtheorem{lemma}{Lemma}[section]
\theoremstyle{plain}
\theoremstyle{plain}
\newcommand\pare[1]{\left(#1\right)}
\newcommand\croch[1]{\left[#1\right]}
\newcommand{\dx}{\partial_x}
\newcommand{\mbf}{\mathbf}
\newcommand{\dsp}{\displaystyle}
\newcommand{\R}{\mbf{R}}
\newcommand{\Z}{\mbf{Z}}
\newcommand{\T}{\mbf{T}}
\newcommand{\lo}{\mathcal{L}}
\newcommand{\xin}{\xi^{n+1}}
\newcommand{\roin}{\rho_i^n}
\newcommand{\roinn}{\rho_i^{n+1}}
\newcommand{\rojn}{\rho_j^n}
\newcommand{\rojnn}{\rho_j^{n+1}}
\newcommand{\gn}{g^n}
\newcommand{\gnn}{g^{n+1}}
\newcommand{\jn}{j^n}
\newcommand{\jnn}{j^{n+1}}
\newcommand{\Jn}{J^n}
\newcommand{\Jnn}{J^{n+1}}
\newcommand{\an}{\alpha^n}
\newcommand{\ann}{\alpha^{n+1}}
\newcommand{\bn}{\gamma^n}
\newcommand{\bnn}{\gamma^{n+1}}
\newcommand{\ip}{{i+\frac{1}{2}}}
\newcommand{\ipp}{{i+\frac{3}{2}}}
\newcommand{\im}{{i-\frac{1}{2}}}
\newcommand{\jp}{{j+\frac{1}{2}}}
\newcommand{\jpp}{{j+\frac{3}{2}}}
\newcommand{\jm}{{j-\frac{1}{2}}}
\newcommand{\Dt}{\Delta t}
\newcommand{\Dx}{\Delta x}
\newcommand{\E}{\mathbb{E}}
\newcommand{\fn}{\mathcal{F}_n}
\author{Nathalie Ayi\thanks{Laboratoire Jacques-Louis Lions, UPMC, Paris 6},  Erwan Faou\thanks{IRMAR Universit\'e de Rennes 1 \& INRIA}}
\title{Analysis of an asymptotic preserving scheme for stochastic linear kinetic equations in the diffusion limit}
\begin{document}
\maketitle

\textbf{Abstract.} We present an asymptotic preserving scheme based on a micro-macro decomposition for stochastic linear transport equations in kinetic and diffusive regimes. We perfom a mathematical analysis and prove that the scheme is uniformly stable with respect to the mean free path of the particles in the simple telegraph model and in the general case. We present several numerical tests which validate our scheme. \\

\textbf{Key words.} stochastic transport equations, diffusion limit, asymptotic preserving scheme, stiff terms, stability analysis

\section{Introduction}

In the physical contexts associated with neutron transport, radiative transfer, rarefied gas dynamics, the systems can be described at several scales: the microscopic one which is interested into the evolution of each particle, the macroscopic one which, as indicated by its name, deals with the macroscopic quantities. There exists also an intermediate scale called mesoscopic where, this time, the evolution of the density of particles satisfying a kinetic equation is studied. The change from one scale to another is done by passing to the limit on one parameter of the system: when starting at the mesoscopic scale, the passage to the limit is on the mean free path, denoted by $\varepsilon$, which goes to $0$.\\
\indent This article focuses on one specific type of limit: the diffusion one. From a theoretical point of view, this subject has been treated in various different frameworks. We can mention the passage from the BGK model to the Navier-Stokes equation \cite{SR}, from the Boltzmann equation to the incompressible Navier-Stokes equation \cite{GSR} or the convergence to the Rosseland approximation \cite{BGPS}. The  starting point of our motivation is a stochastic perturbation of this last case by a Wiener process as in \cite{DebDeMVov} (note that other types of stochastic version of this equation exist as in \cite{AyiGoud} but will not be treated here).\\
\indent Indeed,  lately, the study of stochastic perturbation of well known deterministic partial differential equations has been a subject of growing interest. The introduction of such term can be justified to model numerical and empirical uncertainties. What we are interested in here is a numerical study of these problems. \\
\indent These types of problems, associated with a change of scales, can be very challenging numerically. Because of the stiff terms which are contained in the kinetic equation, classical numerical methods are prohibitively expensive. What we would like is schemes which mimics the asymptotic behavior of the kinetic equation, i.e. reduce to numerical approximations of the macroscopic equation when the scaling parameter goes to $0$. This is exactly the purpose of the Asymptotic Preserving (AP) schemes. They have been first studied in neutron transport by Larsen, Morel and Miller \cite{LMM}, Larsen and Morel \cite{LaMo} and Jin and Levermore \cite{JinLev1,JinLev2} for steady problems. For time dependent problems, we can mention the works of Klar \cite{Klar}, Jin, Pareschi and Toscani \cite{JinPaTo} who proposed two classes of semi-implicit time discretizations. \\
\indent The starting point of this article is a scheme proposed by Lemou and Mieussens in \cite{LemouMieussens} based on the micro-macro decomposition of the distribution function into microscopic and macroscopic components. The decomposition only uses basic properties of the collision operator that are common to most of kinetic equations (namely conservation and equilibrium properties) and leads to a coupled system of equations for these two components without any linearity assumption. One of the interest of this approach is that it appears to be very general, as it can be applied to kinetic equations for both diffusion limit (see \cite{LemouMieussens,LiuMieussens} for linear transport equations and \cite{BLM_Kac} for the nonlinear Kac equation) and hydrodynamic regimes (see \cite{BLM_BGK} for the Boltzmann equation for instance).\\
\indent The aim of our article is to apply this method to obtain an AP scheme in the case of  linear kinetic equations with a stochastic perturbation modelled by a multiplicative Wiener process. To our knowledge, this is the first study of this type for stochastic kinetic equations with  multiplicative noise. Actually, though those equations are more and more studied from a theoretical point of view as mentioned previously, very little is done on that scope numerically, more precisely in the domain of AP schemes for stochastic partial differential equations. Still, note that there exists works of AP schemes in the presence of randomness in the context of uncertainty quantification (see  for instance \cite{JinLiu,JXZ,JLM}). The techniques developed in these latter cases are very different from the ones that we will adopt, that are linked with stochastic calculus. \\
\indent The paper is organized as follows : in Section \ref{setting}, we introduce the model under study, which is a stochastic kinetic linear equation with multiplicative noise, and we present its discretization by the AP scheme. Section \ref{telegraph} is devoted to the stability analysis in the simpler case of two discrete velocities, the telegraph equation, perturbed by a Brownian Motion. In Section \ref{generalcase}, we prove the stability in the general case under an explicit CFL condition. Finally, we present various numerical tests in Section \ref{Snumerics} which validates our scheme.

\section{General setting}
\label{setting}

We are interested into the following stochastic linear kinetic equation (see \cite{DebDeMVov}) 
\begin{equation}
\label{eq_diff_sto}
df + \frac{1}{\varepsilon} v \partial_x f dt = \frac{\sigma}{\varepsilon^2} \mathcal{L} f dt + f \circ Q dW_t
\end{equation}
where $f$ is the distribution function of particles that depends on time $t>0$, on position $x \in \T = \R / 2\pi\Z$ and on velocity $v \in [-1,1]$, $dW_t$ a cylindrical Wiener process on the Hilbert space $L^2(\T)$. We can define it by setting $$dW_t = \sum_{k\geq 0} e_k d\beta_k(t)$$ where the $(\beta_k)_{k \geq 0}$ are independent Brownian motions on the real line and $(e_k)_{k \geq 0}$ a complete orthonormal system in the Hilbert space  $L^2(\T)$. $Q$ is  a linear self-adjoint operator on $L^2(\T)$ such that 
\begin{equation}
\label{trace}
\sum_{k \geq 0} \| Q e_k\|_{L_x^\infty}^2 < + \infty.
 \end{equation} 
Moreover, we assume that $\sigma$ satisfies $0<\sigma_m \leq \sigma(x) \leq \sigma_M$ for every $x$.\\
 In Equation \eqref{eq_diff_sto}, the left-hand side represents the free transport of the particles while the right-hand side models the interaction of particles with the medium. \\
 
\noindent  We define the operator $\Pi$ such that 
 \begin{equation*}
 \Pi \phi = \frac{1}{2} \int_{-1}^1 \phi(v) dv
 \end{equation*}
 which is the average of every velocity dependent function  $\phi$. The linear operator $\lo$ that we will consider is given by 
 \begin{equation*}
 \lo f(v) = \int_{-1}^1 s(v,v') (f(v')-f(v)) dv' , 
 \end{equation*} 
 where the kernel $s$ is such that $0<s_m \leq s(v,v') \leq s_M$ for every $v,v' \in [-1,1]$. We assume that $s$ satisfies $\dsp{\int_{-1}^1 s(v,v') dv' = 1}$ and that it is symmetric: $s(v,v')=s(v',v)$. It is standard to state 
 the following properties :
\begin{center}
\begin{enumerate}
\item[$\bullet$] $\lo$ acts only on the velocity dependence of $f$ (it is local with respect to $t$ and $x$).
\item[$\bullet$] $\Pi (\lo \phi) = 0$ for every $\phi \in L^2([-1,1])$.
\item[$\bullet$] The null space of $\lo$ is $\mathcal{N}(\lo) = \{\phi = \Pi \phi\}$
 (constant functions).
 \item[$\bullet$] The rank of $\lo$ is $\mathcal{R}(\lo) = \mathcal{N}^\perp(\lo) = \{\phi \text{ s.t. } \Pi \phi = 0\}$.
  \item[$\bullet$] $\lo$ is non-positive self-adjoint in $ L^2([-1,1])$ and we have 
  \begin{equation}
  \label{bklp}
  \Pi (\phi \lo \phi) \leq -2 s_m \Pi (\phi^2)
  \end{equation}
  for every $\phi \in \mathcal{N}^\perp(\lo)$.
    \item[$\bullet$]  $\lo$ admits a pseudo inverse from $\mathcal{N}^\perp(\lo)$ onto $\mathcal{N}^\perp(\lo)$ denoted by $\lo^{-1}$.
         \item[$\bullet$] The orthogonal projection from $ L^2([-1,1])$ onto $\mathcal{N}^\perp(\lo)$ is $\Pi$.
\end{enumerate}
\end{center}
For instance, the one-group transport equation corresponds to 
$$
 \lo f  =  \dsp{\int_{-1}^1 \frac{1}{2} (f(v')-f(v))dv'}
   =  \Pi f -f, 
$$
and it is classical in this case to prove that $\lo$ satisfies all the previous properties. Equation \eqref{eq_diff_sto} becomes 
 \begin{equation*}
df + \frac{1}{\varepsilon} v \partial_x f dt = \frac{\sigma}{\varepsilon^2} (\Pi f -f) dt + f \circ Q dW_t.
\end{equation*}
If the velocity set is $\{-1,1\}$, $dv$ is the discrete Lebesgue measure and the corresponding one-group transport equation is called the telegraph equation. We denote $f(t,x,1) := p(t,x)$ and $f(t,x,-1) := q(t,x)$. For $\sigma = 1$, the equation \eqref{eq_diff_sto} becomes 
\begin{equation}
\label{galibier}
\left\{\begin{array}{l}
\dsp{dp + \frac{1}{\varepsilon} \dx p dt = \frac{1}{\varepsilon^2} (\frac{p+q}{2} - p) dt + p \circ dW_t}\\
\\
\dsp{dq - \frac{1}{\varepsilon} \dx q dt = \frac{1}{\varepsilon^2} (\frac{p+q}{2} - q) dt + q \circ dW_t}. 
\end{array}
\right.
\end{equation}

We want to construct an AP scheme associated with the diffusive limit of \eqref{eq_diff_sto} when $\varepsilon$ goes to $0$ which is 
\begin{equation}
\label{diff_eq}
d \rho - \partial_x \kappa \partial_x \rho dt = \rho \circ Q dW_t
\end{equation}
 with $\dsp{\kappa(x) = - \frac{\Pi ( v \lo^{-1} v)}{\sigma(x)}}$, see \cite{DebDeMVov}.

Quite similarly to the deterministic case in \cite{LemouMieussens,LiuMieussens}, we adopt a micro-macro decomposition. Indeed, we introduce $g$ such that  
\begin{equation}
\label{def_f_g}
f= \rho + \varepsilon g  ~\text{with } \rho = \Pi ( f ) ~\text{and } g ~\text{is such that } \Pi (g) = 0
\end{equation}
 and with the hypothesis on $\mathcal{L}$, we obtain an equivalent system to \eqref{eq_diff_sto}:
 \begin{equation}
 \label{micromacrostrato}
 \left\{\begin{array}{l}
 \dsp{d\rho + \partial_x \Pi (v g)  dt = \rho \circ Q dW_t} \\
 \dsp{dg + \frac{1}{\varepsilon} (I - \Pi)  (v \partial_x g) dt = \frac{\sigma}{\varepsilon^2} \lo g dt + g \circ Q dW_t - \frac{1}{\varepsilon^2} v \partial_x \rho dt}.
 \end{array} 
 \right.
 \end{equation}
 Using the formula which links the It\^o integral and the Stratanovich one, we can rewrite \eqref{micromacrostrato} as follows
  \begin{equation}
 \label{micromacro}
 \left\{\begin{array}{l}
 \dsp{d\rho + \partial_x \Pi (v g ) dt = \rho Q dW_t + \frac{1}{2} \rho \sum_{k \geq 0} (Q e_k)^2} dt \\
 \dsp{dg + \frac{1}{\varepsilon} (I - \Pi)  (v \partial_x g) dt = \frac{\sigma}{\varepsilon^2} \lo g dt + g  Q dW_t  + \frac{1}{2} g  \sum_{k \geq 0} (Q e_k)^2 dt - \frac{1}{\varepsilon^2} v \partial_x \rho dt}.
 \end{array} 
 \right.
 \end{equation}
 We study the following numerical scheme for this system with a time step $\Delta t$ and times $t_n = n \Delta t$ and two staggered grids of step $\Delta x$ and nodes $x_i = i \Delta x$ and $x_{i+\frac{1}{2}} = (i + \frac{1}{2}) \Delta x$ extended by periodicity. We are interested in a semi-discretization in $x$, and we use the notation $\rho_i^n \approx \rho(t_n,x_i)$ and $g_{i+\frac{1}{2}}^n (v) \approx g(t_n,x_{i+\frac{1}{2}},v)$.  \begin{subequations}
 \label{scheme}
\begin{gather} 
  \dsp{\roinn = \roin  - \Dt ~ \Pi \pare{ v \frac{\gnn_\ip - \gnn_\im}{\Dx}}  + \roin \pare{\frac{1}{2} \Delta t \sum_{k \geq 0} (b_{ik})^2  + \sqrt{\Delta t} \sum_{k \geq 0} b_{ik} \xi_k^{n+1}}}  ~~~~~~~~~~~~~~~~~~~  ~~~~~~~~~
  \label{scheme_a}\\
  \begin{array}{l}
  \dsp{\gnn_\ip = \gn_\ip - \frac{\Dt}{\varepsilon \Dx} (I - \Pi) \pare{v^+ \pare{\gn_\ip - \gn_\im} + v^- \pare{\gn_\ipp -\gn_\ip}}} \\
  \\
  ~~~~~~~~~~~~~~~~~~~~~~~~~\dsp{ - \frac{\sigma_\ip}{\varepsilon^2} \lo g^{n+1}_\ip \Dt + \gn_\ip \pare{\frac{1}{2} \Delta t \sum_{k \geq 0} (b_{i+\frac{1}{2},k})^2  + \sqrt{\Delta t} \sum_{k \geq 0} b_{i+\frac{1}{2},k} \xi_k^{n+1}}}
 \\
   ~~~~~~~~~~~~~~~~~~~~~~~~~~~~~~~~~~~~~~~~~~~~~~ \dsp{  - \frac{1}{\varepsilon^2} v \frac{\rho_{i+1}^n - \roin}{\Dx} \Dt},
 \end{array} \label{scheme_b}
 \end{gather} 
 \end{subequations} 
where $v^+=\max(v,0)$ and $v^-=\min(v,0)$, $(\xi_k^n)_{n \geq 1, k \geq 0}$ are i.i.d. variables with a normal distribution and  we use the notation  $b_{ik} := Qe_k(x_i)$ and  $b_{i + \frac{1}{2},k} := Qe_k(x_{i + \frac{1}{2}})$. \\
 
 Let us briefly comment the scheme \eqref{scheme}. Similarly to the deterministic case, we can observe that amongst the stiffest terms in $\varepsilon$, only the collision term is implicit. This will ensure stability as $\varepsilon$ goes to $0$. Furthermore, the upwind discretization of $(I- \Pi) (v \dx g)$ will ensure stability in the kinetic regime while the centered approximation of $\dx \Pi (v g )$ and $v \dx \rho$ will allow to capture the diffusion limit. Indeed, we have formally when $\varepsilon$ goes to $0$
 \begin{equation}
 \label{approxg}
 \gnn_\ip = - \frac{1}{\sigma_\ip} \lo^{-1}\pare{v \frac{\rho_{i+1}^n - \roin}{\Dx} \Dt} + O(\varepsilon).
 \end{equation}
 Therefore, using  \eqref{approxg} in \eqref{scheme_a}, we obtain when passing to the limit
 \begin{equation}
 \roinn = \roin - \frac{\Dt}{\Dx} \pare{\kappa_\ip \frac{\rho_{i+1}^n - \roin}{\Dx}  - \kappa_\im \frac{\rho_{i}^n - \rho_{i-1}^n}{\Dx}} + \roin \pare{\frac{1}{2} \Delta t \sum_{k \geq 0} (b_{ik})^2  + \sqrt{\Delta t} \sum_{k \geq 0} b_{ik} \xi_k^{n+1}}
 \end{equation}
 which is the usual 3-points stencil explicit scheme for the diffusion equation \eqref{diff_eq} with the notation $\dsp{\kappa_\ip = - \frac{\Pi ( v \lo^{-1} v)}{\sigma_\ip}}$.\\
 
 In the following, we are interested in the stability of this scheme. Of course, in our AP scheme context, we want to prove uniform stability with respect to $\varepsilon$. In the next section, we start with the simpler case of the telegraph equation in which we have only two discrete velocities $v=\pm1$, and a one dimensional Brownian motion. The general case is proved in section \ref{generalcase}.
 
 \section{The telegraph equation}
 \label{telegraph}

In the telegraph model introduced  previously, only two velocities $v = +1$ and $v = -1$ are present. As mentioned previously, in that case, the solution $f$ is thus determined by 
$f(t,x,1) := p(t,x)$ and   $f(t,x,-1) := q(t,x)$ and the equation \eqref{galibier} reads
 \begin{equation}
\label{telegraph_eq}
\left\{\begin{array}{l}
\dsp{dp + \frac{1}{\varepsilon} \dx p dt = \frac{1}{2 \varepsilon^2} ({q} - p) dt + p \circ d\beta(t)}\\
\\
\dsp{dq - \frac{1}{\varepsilon} \dx q dt = \frac{1}{2 \varepsilon^2} ({p} - q) dt + q \circ d\beta(t)}, 
\end{array}
\right.
\end{equation}
 with $\beta(t)$ a one dimensional Brownian motion. For the telegraph equation, $f$ is decomposed into $f=\rho E + \varepsilon g$, where $\rho = \frac{1}{2} (p+q) =: \Pi f$, $E=(1,1)$ and $g=(\alpha, \gamma)$ with $\Pi  g = 0$ which is written $\alpha=-\gamma$. Thus, the micro-macro system \eqref{micromacrostrato}  is written here
 \begin{equation}
\left\{\begin{array}{l}
\dsp{d \rho +  \dx \frac{\alpha - \gamma}{2} dt = \rho \circ d\beta(t)}\\
\\
\dsp{d \alpha + \frac{1}{\varepsilon} \dx \frac{\alpha + \gamma}{2} dt = - \frac{1}{\varepsilon^2} \alpha dt + \alpha \circ d\beta(t) - \frac{1}{\varepsilon^2} \dx \rho dt}\\
\\
\dsp{d \gamma -  \frac{1}{\varepsilon} \dx \frac{\gamma + \alpha}{2} dt = - \frac{1}{\varepsilon^2} \gamma dt + \gamma \circ d\beta(t) + \frac{1}{\varepsilon^2} \dx \rho dt}.\\
\end{array}
\right.
\end{equation}
For this system, the scheme \eqref{scheme}  takes the form  
 \begin{subequations}
 \begin{gather}
 \roinn = \roin - \frac{\Dt}{2 \Dx} \croch{\pare{\ann_\ip - \bnn_\ip} - \pare{\ann_\im - \bnn_\im}} + \roin \pare{\frac{\Dt}{2} + \sqrt{\Dt} ~\xin} \\
 \begin{array}{l}
 ~\\
 \dsp{\ann_\ip = \an_\ip {\color{red}-} \frac{\Dt}{\varepsilon \Dx} \croch{\pare{\an_\ip - \an_\im} - \frac{1}{2} \pare{\an_\ip - \an_\im - \bn_\ipp + \bn_\ip}}}\\
 ~~~~~~~~~~~~~~~~~~~~~~~~~~~~~~\dsp{- \frac{\Dt}{\varepsilon^2} \ann_\ip + \an_\ip \pare{\frac{\Dt}{2} + \sqrt{\Dt} ~\xin}  - \frac{1}{\varepsilon^2} \Dt \pare{\frac{\rho_{i+1}^n-\roin}{\Dx}}}
 \end{array}\\
  \begin{array}{l}
  ~\\
 \dsp{\bnn_\ip = \bn_\ip{\color{red}-} \frac{\Dt}{\varepsilon \Dx} \croch{-\pare{\bn_\ipp - \bn_\ip} - \frac{1}{2} \pare{\an_\ip - \an_\im - \bn_\ipp + \bn_\ip}}}\\
 ~~~~~~~~~~~~~~~~~~~~~~~~~~~~~~\dsp{- \frac{\Dt}{\varepsilon^2} \bnn_\ip + \bn_\ip \pare{\frac{\Dt}{2} + \sqrt{\Dt} ~\xin}  {\color{red}+}\frac{1}{\varepsilon^2} \Dt \pare{\frac{\rho_{i+1}^n-\roin}{\Dx}}}
 \end{array}
 \end{gather} 
  \end{subequations}
  where $(\xi^n)_{n \geq 1}$ are i.i.d. variables with a normal distribution.
  We denote $j := \frac{1}{2\varepsilon} (p-q) = \frac{1}{2} (\alpha - \gamma)$ and the above scheme can be written under the much simpler form
   \begin{subequations}
   \label{scheme_easy1}
 \begin{gather}
 \roinn = \roin - \frac{\Dt}{\Dx} \pare{\jnn_\ip  - \jnn_\im} + \roin \pare{\frac{\Dt}{2} + \sqrt{\Dt} ~\xin} ~~~~~~~~~~~~~~~~~~~~~~~~~~~~~~~~\\
 \begin{array}{l}
 \dsp{\jnn_\ip = \jn_\ip + \frac{\Dt}{2 \varepsilon \Dx} \croch{\jn_\ipp - 2 \jn_\ip + \jn_\im}}\\
 ~~~~~~~~~~~~~~~~~~~~~~~~~~~~~~\dsp{- \frac{\Dt}{\varepsilon^2} \jnn_\ip + \jn_\ip \pare{\frac{\Dt}{2} + \sqrt{\Dt} ~\xin}  - \frac{1}{\varepsilon^2} \Dt \pare{\frac{\rho_{i+1}^n-\roin}{\Dx}}}
 \end{array}
 \end{gather} 
  \end{subequations}
  In the following theorem, we prove the stability of this scheme. 
  \begin{theorem}
  \label{th_stability1}
  There exist constants $L$, $\Dt_0$, $\Dx_0$ and $\varepsilon_0$ such that for all $\Dt \leq \Dt_0$, $\Dx \leq \Dx_0$ and $\varepsilon \leq \varepsilon_0$ satisfying the CFL condition 
 \begin{equation}
 \label{lombric}
  \Dt \leq \frac{1}{2} \pare{\frac{\Dx^2}{2} + \varepsilon \Dx}
 \end{equation}
   then we have 
  \begin{equation*}
  \E \croch{\sum_i (\roin)^2 + (\varepsilon \jn_\ip)^2} \leq e^{L n \Dt} \, \E \croch{\sum_i (\rho_i^0)^2 + (\varepsilon j^0_\ip)^2} 
  \end{equation*}
  for every n.   \end{theorem}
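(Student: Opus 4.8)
The plan is to establish the one-step estimate
\[
\E\croch{E^{n+1}\mid\fn}\le(1+L\Dt)\,E^n,\qquad E^n:=\sum_i\pare{(\roin)^2+(\varepsilon\jn_\ip)^2},
\]
valid for $\Dt\le\Dt_0$, $\Dx\le\Dx_0$, $\varepsilon\le\varepsilon_0$ under the CFL \eqref{lombric}, and then to iterate: by the tower property $\E[E^{n+1}]\le(1+L\Dt)\,\E[E^n]$, hence $\E[E^n]\le(1+L\Dt)^n\,\E[E^0]\le e^{Ln\Dt}\,\E[E^0]$, which is the claim.

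For the one-step bound, the first move is to make the implicit part explicit. The only implicit term in \eqref{scheme_easy1} is the relaxation term $-\tfrac{\Dt}{\varepsilon^2}\jnn_\ip$, which is local in $i$, so the $j$-line is solved pointwise as $\jnn_\ip=\lambda\,(\text{time-}n\text{ data})$ with $\lambda:=\varepsilon^2/(\varepsilon^2+\Dt)\in(0,1]$, and then $\roinn$ is given explicitly by the $\rho$-line. Both updates are affine in $\xin$, so we may write $\roinn=P_i+\sqrt{\Dt}\,R_i\,\xin$ and $\varepsilon\jnn_\ip=J_\ip+\sqrt{\Dt}\,K_\ip\,\xin$ with $P_i,R_i,J_\ip,K_\ip$ all $\fn$-measurable; a short computation gives $K_\ip=\lambda\,(\varepsilon\jn_\ip)$ and $R_i=\roin-\tfrac{\lambda\Dt}{\Dx}\pare{\jn_\ip-\jn_\im}$. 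Since $\xin$ is a standard Gaussian independent of $\fn$,
\[
\E\croch{E^{n+1}\mid\fn}=\sum_i\pare{P_i^2+J_\ip^2}+\Dt\sum_i\pare{R_i^2+K_\ip^2},
\]
and it remains to bound the two sums.

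The noise part $\Dt\sum_i(R_i^2+K_\ip^2)$ is the easy one: since $\lambda\le1$ one has $|K_\ip|\le|\varepsilon\jn_\ip|$, while in $R_i$ the prefactor $\frac{\lambda^2\Dt^2}{\varepsilon^2\Dx^2}=\frac{\Dt^2\varepsilon^2}{\Dx^2(\varepsilon^2+\Dt)^2}$ is bounded by a universal constant under \eqref{lombric}, bounding $(\varepsilon^2+\Dt)^2$ from below by $\varepsilon^4$ when $\Dx\le\varepsilon$ and by $4\varepsilon^2\Dt$ when $\Dx\ge\varepsilon$ (this gives $\le\tfrac58$ in both regimes). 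A discrete Young inequality then yields $\sum_i(R_i^2+K_\ip^2)\le C\,E^n$, so this contribution is $O(\Dt)\,E^n$, which is harmless.

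The heart of the proof is the deterministic part $\sum_i(P_i^2+J_\ip^2)\le(1+C\Dt)\,E^n$, where $(P_i,J_\ip)$ is exactly the Lemou--Mieussens telegraph update, perturbed only by the factors $(1+\tfrac{\Dt}{2})$ coming from the It\^o--Stratonovich drift in \eqref{scheme_easy1}. I would run the standard discrete energy estimate on it: multiply the equation for $P_i$ by $P_i$ and the $\varepsilon^2$-weighted equation for the microscopic part by $J_\ip$, sum over $i$, use discrete summation by parts and periodicity, and split the products $\roin P_i$ and $(\varepsilon\jn_\ip)J_\ip$ via $ab=\tfrac12(a^2+b^2-(a-b)^2)$. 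After an Abel summation the two transport couplings — the $\tfrac{\Dt}{\Dx}$ term of the $\rho$-line and the $\tfrac{\Dt}{\varepsilon^2\Dx}$ term of the $j$-line — combine into a single discrete second difference acting on $J$; together with the explicit anti-dissipative upwind term $\tfrac{\Dt}{2\varepsilon\Dx}$ and the implicit dissipation $-\tfrac{\Dt}{\varepsilon^2}\sum_i J_\ip^2$, its total contribution is nonpositive precisely under \eqref{lombric}, uniformly in $\varepsilon$, while the $(1+\tfrac{\Dt}{2})$ factors and the quadratic remainders $-(P_i-\roin)^2$, $-(J_\ip-\varepsilon\jn_\ip)^2$ only generate $O(\Dt)\,E^n$ terms. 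I expect this step to be the main obstacle: since the scheme is implicit only in the collision term but explicit in transport and in the anti-dissipative diffusion, the cross terms between the $\rho$-equation (which sees $\jnn$) and the $j$-equation (which sees $\roin$) do not cancel, but leave a residual of order $\Dt^2/(\varepsilon^2\Dx^2)$ times a discrete Dirichlet energy of $\jnn$, with the wrong sign, which must be dominated by the implicit dissipation and the control of the upwind term. Closing all these $\varepsilon$-stiff contributions with an $\varepsilon$-independent constant, at exactly the tolerance afforded by $\Dt\le\tfrac12(\tfrac{\Dx^2}{2}+\varepsilon\Dx)$ — parabolic as $\varepsilon\to0$, hyperbolic for $\varepsilon=O(1)$ — is where the precise form of the CFL condition is used.
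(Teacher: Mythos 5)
Your probabilistic skeleton is correct and is in fact the same as the paper's: condition on $\fn$, exploit that the update is affine in the Gaussian increment so that the cross term vanishes and the noise contributes exactly $\Dt\sum_i(R_i^2+K_\ip^2)$, check that $\lambda\mu$ and $\lambda\mu^2$ are uniformly bounded under \eqref{lombric} so that this contribution is $O(\Dt)E^n$, and conclude by induction via $(1+L\Dt)^n\le e^{Ln\Dt}$. Your identities $K_\ip=\lambda\varepsilon\jn_\ip$ and $R_i=\roin-\tfrac{\lambda\Dt}{\Dx}(\jn_\ip-\jn_\im)$ are the physical-space counterparts of the paper's decomposition $A_{n+1}=\widetilde A+\sqrt{\Dt}\,\xi^{n+1}B+\Dt\,C$ of the amplification matrix. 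Where you genuinely diverge is the deterministic core. The paper passes to Fourier variables (a Von Neumann analysis, legitimate here because $\sigma\equiv1$ makes the coefficients constant), reduces the claim to $\|\widetilde A\|_2^2\le1$ for an explicit $2\times2$ matrix, and verifies this by computing the trace and determinant of $\widetilde A^*\widetilde A$ and checking that a concave quadratic $Q(X)$ is nonnegative at $X=1$, which is exactly \eqref{lombric}; this makes the hardest step a finite algebraic computation. You instead propose a physical-space energy estimate on $(P_i,J_\ip)$, which is essentially the paper's own Section \ref{generalcase} argument specialized to two velocities. That route is viable, but it is precisely the step you leave as a plan rather than execute, and it is where the danger lies: the general-case estimate of Section \ref{generalcase} closes only under the slightly more restrictive condition $\Dt\le\frac{\widetilde\sigma\Dx^2}{2(2+\varepsilon)}+\frac{\varepsilon\Dx}{2+\varepsilon}$, so to recover exactly \eqref{lombric} you must exploit the telegraph-specific facts ($|v|=1$, so no factor is lost in $\pare{\Pi(vg)}^2\le\tfrac12\Pi(|v|g^2)$) and track the constants in the Young inequalities; alternatively you can bypass the energy estimate entirely by invoking Parseval and the $2\times2$ spectral computation. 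As written, your argument reduces the theorem correctly to the deterministic $\ell^2$-stability of the (drift-perturbed) Lemou--Mieussens telegraph scheme under \eqref{lombric}, but does not yet prove that stability.
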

  \begin{proof}
  Similarly to the deterministic case, see \cite{LemouMieussens}, the proof presented here is based on a standard Von Neumann analysis.  \\ 
  We introduce the following notations $\Jn_\ip = \varepsilon \jn_\ip$, $\mu = \frac{\Dt}{\varepsilon \Dx}$ and $\lambda = \frac{1}{1+ \Dt/\varepsilon^2}$. Then, \eqref{scheme_easy1} is rewritten as follows
     \begin{subequations}
   \label{scheme_easy}
 \begin{gather}
 \rojnn = \rojn - \mu \pare{\Jnn_\jp  - \Jnn_\jm} + \rojn \pare{\frac{\Dt}{2} + \sqrt{\Dt} ~\xin} ~~~~~~~~~~~~~~~~~~~~~~~~~~~~~~~~~~~~~~~\\
 \dsp{\Jnn_\jp =\lambda \left(  \Jn_\jp (1 + \frac{\Dt}{2} + \sqrt{\Dt} ~\xin)  + \frac{\mu}{2} \croch{\Jn_\jpp - 2 \Jn_\jp + \Jn_\jm} - \mu  \pare{\rho_{j+1}^n-\rojn} \right)}
 \end{gather} 
  \end{subequations}
  where the index $i$ has been repaced by $j$ to avoid confusion with $i = \sqrt{-1}$. We take $\rojn$ and $\Jn_\jp$ on the form of elementary waves $\rojn = \rho^n(\varphi) e^{ij\varphi}$ and $\Jn_\jp = \Jn(\varphi) e^{i (\jp) \varphi}$. As in \cite{LemouMieussens}, we are interested into finding a relation between the amplitudes and conclude by linearity of the scheme. We obtain the following one 
  \begin{equation}
  \left\{
  \begin{array}{l}
  \dsp{\rho^{n+1} = \rho^n  (1 + \frac{\Dt}{2}  + \sqrt{\Dt} ~\xin) - 2 i \mu J^{n+1} \sin \theta} \\[2ex]
  \dsp{J^{n+1} = \lambda \pare{J^n  \pare{1 + \frac{\Dt}{2} + \sqrt{\Dt} ~\xin - 2 \mu \sin^2\theta} - 2i\mu \sin \theta \rho^n}}
  \end{array}
  \right.
  \end{equation}
  with $\dsp{\theta = \frac{\varphi}{2}}$. Under a matrix form, this rewrite as 
  \begin{equation}
  \begin{pmatrix}
  \rho^{n+1} \\ 
  J^{n+1}
  \end{pmatrix} = A_{n+1} \begin{pmatrix}
  \rho^{n} \\ 
  J^{n}
  \end{pmatrix}
  \end{equation}
  with $$ A_{n+1} = \begin{pmatrix}
  \dsp{1+ \frac{\Dt}{2}  + \sqrt{\Dt} ~\xin  - 4 \mu^2 \lambda \sin^2 \theta}  &\dsp{ -i (  1+ \frac{\Dt}{2}  + \sqrt{\Dt} ~\xin  - 2 \mu  \sin^2 \theta) 2 \lambda \mu \sin \theta}  \\ 
\dsp{  -2 i \mu \lambda\sin \theta  }&  \dsp{(  1+ \frac{\Dt}{2}  + \sqrt{\Dt} ~\xin  - 2 \mu  \sin^2 \theta) \lambda}.
  \end{pmatrix} $$
  At this point, we notice that $A_{n+1}$ is a stochastic perturbation of the matrix $\widetilde{A}$ appearing in \cite{LemouMieussens} : 
  \begin{equation}
  \label{atilde} \widetilde{A} = \begin{pmatrix}
  \dsp{1  - 4 \mu^2 \lambda \sin^2 \theta}  &\dsp{ -i (  1  - 2 \mu  \sin^2 \theta) 2 \lambda \mu \sin \theta}  \\ 
\dsp{  -2 i \mu \lambda \sin \theta  }&  \dsp{(  1  - 2 \mu  \sin^2 \theta) \lambda}
  \end{pmatrix}. 
  \end{equation}

Indeed, we can write 
\begin{equation}
\label{an1}
 A_{n+1} = \widetilde{A} + \sqrt{\Dt} \xi^{n+1} B + \Dt \,  C
\end{equation}
where the matrices $B = B(\mu \lambda,\mu^2 \lambda, \sin \theta)$ and $C = C(\mu \lambda,\mu^2 \lambda, \sin \theta)$ are explicitly given from the expression of $A_{n+1}$. 
%
%
%
Now, note that  under the assumption \eqref{lombric}, we have
$$
\lambda \mu = \frac{\Dt}{\varepsilon \Dx( 1 + \Dt /\varepsilon^2)} \leq \min( \frac{\varepsilon }{\Dx}, \frac{\Dt}{\varepsilon \Dx}) \leq  \min( \frac{\varepsilon }{\Dx}, \frac{1}{2}( 1 + \frac{\Dx}{2\varepsilon}))  \leq C
$$ 
for some fixed constant $C$, and similarly 
$$
\lambda \mu^2 = \frac{\Dt^2}{\varepsilon^2 \Dx^2( 1 + \Dt /\varepsilon^2)}\leq \min ( \frac{\Dt^2}{\varepsilon^2 \Dx^2}, \frac{\Dt}{ \Dx^2}) \leq \min ( \frac{1}{4}( 1 + \frac{\Dx}{2\varepsilon})^2, \frac12 (1 + \frac{\varepsilon}{\Dx})) \leq C. 
$$
Hence the quantities $\lambda \mu$ and $\lambda \mu^2$ are uniformly bounded under the CFL condition \eqref{lombric}. We deduce that the matrices $B(\lambda \mu, \lambda\mu^2,\sin \theta)$  and $C(\lambda \mu, \lambda\mu^2,\sin \theta)$ are uniformly bounded with respect to $\theta$, and $\Dt$, $\Dx$ and $\varepsilon$ satisfying the condition of the Theorem.\\
Let us denote by $\mathcal{F}_n$ the $\sigma$-algebra generated by $\rho^n, J^n,\xi^n, \rho^{n-1}, J^{n-1}, \xi^{n-1}, \dots, \rho^0,J^0$, then,  by construction $\xi^{n+1}$ is independent of  $\mathcal{F}_n$ and $\rho^n, J^n$ are $\mathcal{F}_n$-measurable. Therefore, by properties of the conditional expectation, we have by explicit calculations using the fact that $\E (\xi^{n+1} |  \mathcal{F}_n)= 0$, 
\begin{multline*}
 \dsp{ \E \croch{\left\| \begin{pmatrix}
  \rho^{n+1} \\ 
  J^{n+1}
  \end{pmatrix} \right\|_2^2\left|  \mathcal{F}_n \right.} } =  \dsp{ \E \croch{\left\|  A_{n+1}\begin{pmatrix}
  \rho^{n} \\ 
  J^{n}
  \end{pmatrix} \right\|_2^2\left|  \mathcal{F}_n \right.} }\\
  \leq 
  \dsp{ \left\|  (\widetilde{A} + \Dt C) \begin{pmatrix}
  \rho^{n} \\ 
  J^{n}
  \end{pmatrix} \right\|_2^2} + \Dt \dsp{ \left\| B \begin{pmatrix}
  \rho^{n} \\ 
  J^{n}
  \end{pmatrix} \right\|_2^2} 
\leq  \dsp{ \Big(\|  \widetilde A \|_2^2 +  L \Dt \Big)  \left\| \begin{pmatrix}
  \rho^{n} \\ 
  J^{n}
  \end{pmatrix} \right\|_2^2  },
 \end{multline*}
 for $\Dt$ small enough, 
  where the constant $L$ depends on bounds on the matrix $\widetilde{A},B$ and $C$. \\
Now we will prove that under the condition \eqref{lombric}, we have $\|  \widetilde A \|_2^2 \leq 1$ which shows the result by induction. Indeed, $\|  \widetilde A \|_2^2$ is the largest eigenvalue of the matrix $\widetilde{A}^* \widetilde{A}$. Denoting by $\widetilde{T}$ and $\widetilde{D}$ the trace an determinant of this latter matrix, the largest eigenvalue is 
$$
\frac{\widetilde{T} + \sqrt{\widetilde{T}^2 - 4 \widetilde{D}}}{2}
$$
and the condition $\|  \widetilde A \|_2^2 \leq 1$ is thus equivalent to $1 - \widetilde{T} + \widetilde{D} \geq 0$. 
Now we calculate that, with $X = (\sin \theta)^2$, $\widetilde D = \lambda^2 (1 - 2 \mu X)^2$ and 
$$
\widetilde{T} = 4 \lambda^2 \mu^2 X + \lambda^2 (1 - 2 \mu X)^2 + 4 \lambda^2 \mu^2 X(1 - 2\mu X)^2 + (1 - 4 \lambda \mu^2 X)^2. 
$$
Hence 
\begin{eqnarray*}
1 -  \widetilde{T} +  \widetilde{D} &=&  1 -   4 \lambda^2 \mu^2 X - 4 \lambda^2 \mu^2 X(1 - 2\mu X)^2 - (1 - 4 \lambda \mu^2 X)^2\\
&=&   -   4 \lambda^2 \mu^2 X - 4 \lambda^2 \mu^2 X(1 - 2\mu X)^2 + 8 \lambda \mu^2 X - 16 \lambda^2 \mu^4 X^2  \\
&=& 4 \lambda \mu^2 X ( - \lambda -  \lambda ( 1 - 2 \mu X)^2 + 2 - 4 \lambda \mu^2 X) \\
&=& 8 \lambda \mu^2 X ( 1 - \lambda + 2\lambda \mu X  - 2 \lambda \mu^2 X^2 - 2 \lambda \mu^2 X). 
\end{eqnarray*}
The stability of the deterministic case is thus ensured if this expression is non negative for all $X \in [0,1]$. Now the polynomial $Q(X) := 1 - \lambda + 2\lambda \mu X  - 2 \lambda \mu^2 X^2 - 2 \lambda \mu^2 X$ is concave and satisfies $Q(0) = 1 - \lambda > 0$. Hence the condition will be satisfied if $Q(1) \geq 0$ which is written 
$$
1 - \lambda + 2\lambda \mu   - 4 \lambda \mu^2   \geq 0
$$
And we easily verify that this condition is ensured under the CFL condition \eqref{lombric}. This finishes the proof of the Theorem.
   \end{proof}

 \section{Stability analysis for the stochastic linear kinetic equations}
 \label{generalcase}
 
 We go back to the study of the general case and establish the uniform stability of the scheme \eqref{scheme}. 
 \begin{theorem}
 If $\Dt$ satisfies the following CFL condition 
 \begin{equation}
 \label{CFL}
 \Dt \leq \frac{2 s_m \sigma_m \Dx^2}{2(2+\varepsilon)} + \frac{\varepsilon \Dx}{2+\varepsilon},
 \end{equation}
  then the sequence $\rho^n$ and $g^n$ defined by the scheme \eqref{scheme} satisfy the energy estimate
 \begin{equation}
   \E \croch{\sum_i (\roin)^2} + \varepsilon^2 \E \croch{\sum_i  \Pi  \pare{(\gn_\ip)^2}} \leq {C(T)} \pare{\E \croch{\sum_i (\rho_i^0)^2} + \varepsilon^2 \E \croch{\sum_i  \Pi  \pare{(g^0_\ip)^2}}}
\end{equation}  for every $n$ with ${C(T)}$ a constant which only depends on $T$. Hence, the scheme \eqref{scheme} is stable.
 \end{theorem}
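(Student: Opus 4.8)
The plan is to adapt the deterministic energy estimate of \cite{LemouMieussens} to the stochastic setting, working with conditional expectations with respect to the natural filtration. Let $\mathcal{F}_n$ be the $\sigma$-algebra generated by $\rho^0,g^0$ and $(\xi_k^m)_{k\geq0,\,m\leq n}$, so that $(\xi_k^{n+1})_{k\geq0}$ is independent of $\mathcal{F}_n$ while $\rho^n,g^n$ are $\mathcal{F}_n$-measurable, and set
$$\mathcal{E}_n := \sum_i (\roin)^2 + \varepsilon^2 \sum_i \Pi\pare{(\gn_\ip)^2}.$$
The goal is the one-step bound $\E\croch{\mathcal{E}_{n+1}\,|\,\mathcal{F}_n} \leq (1+L\Dt)\,\mathcal{E}_n$ for $\Dt,\Dx,\varepsilon$ small under \eqref{CFL}; taking expectations and iterating then gives $\E\croch{\mathcal{E}_n} \leq e^{Ln\Dt}\,\E\croch{\mathcal{E}_0} \leq e^{LT}\,\E\croch{\mathcal{E}_0}$, which is the assertion with $C(T)=e^{LT}$.

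First I would isolate the stochastic contribution. By \eqref{bklp}, the implicit collision operator $I-\tfrac{\sigma_\ip\Dt}{\varepsilon^2}\lo$ is invertible on $\mathcal{N}^\perp(\lo)$ with $\Pi\!\pare{\phi\pare{I-\tfrac{\sigma_\ip\Dt}{\varepsilon^2}\lo}\phi}\geq\pare{1+\tfrac{2s_m\sigma_m\Dt}{\varepsilon^2}}\Pi(\phi^2)$; solving \eqref{scheme} for $\gnn$ one may then write $\gnn_\ip = A_\ip+\sum_k B_{k,\ip}\xi_k^{n+1}$ and $\roinn = A'_i+\sum_k B'_{i,k}\xi_k^{n+1}$ with $A,A',B_k,B'_k$ all $\mathcal{F}_n$-measurable and $B_{k,\ip} = \sqrt{\Dt}\,(I-\tfrac{\sigma_\ip\Dt}{\varepsilon^2}\lo)^{-1}(\gn_\ip\,b_{i+\frac12,k})$, which thus carries the gain $(1+\tfrac{2s_m\sigma_m\Dt}{\varepsilon^2})^{-1}$. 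By orthonormality, $\E\croch{\mathcal{E}_{n+1}\,|\,\mathcal{F}_n} = \sum_i\!\pare{(A'_i)^2+\varepsilon^2\Pi(A_\ip^2)} + \sum_{i,k}\!\pare{(B'_{i,k})^2+\varepsilon^2\Pi(B_{k,\ip}^2)}$: the martingale part has disappeared, and the second sum — the genuine noise contribution, which is precisely what the It\^o drift in \eqref{scheme} accounts for — is bounded by $C\Dt\,\mathcal{E}_n$ using $\sup_i\sum_k b_{ik}^2\leq\sum_k\|Qe_k\|_{L_x^\infty}^2<\infty$ from \eqref{trace} (and likewise at the staggered nodes), Cauchy--Schwarz in $k$, the contraction bound above, and the CFL consequence $\Dt/\Dx\leq C$.

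There remains the $\mathcal{F}_n$-measurable part $\sum_i((A'_i)^2+\varepsilon^2\Pi(A_\ip^2))$, which is the energy of one step of the \emph{deterministic} scheme, and here I would reproduce the Lemou--Mieussens computation: test the macro line against $\roinn$, the micro line against $2\varepsilon^2\gnn_\ip$, apply $\Pi$, sum over $i$, and use periodicity for the discrete summations by parts. Three structural facts drive the balance: (i) the implicit collision term yields the dissipation $2\sigma_m\Dt\sum_i\Pi(\gnn_\ip\lo\gnn_\ip)\leq -4s_m\sigma_m\Dt\sum_i\Pi((\gnn_\ip)^2)$ via \eqref{bklp}; (ii) the upwind discretization of $(I-\Pi)(v\dx g)$, after summation by parts and using $(I-\Pi)\gn_\ip=\gn_\ip$, contributes a nonpositive numerical-diffusion term in $\gn$ together with a cross term absorbed by the $-\,\|\roinn-\roin\|^2$ and $-\,\varepsilon^2\|\gnn-\gn\|^2$ terms produced by testing against $\roinn$ and $\gnn$; (iii) the two coupling terms $\Pi\!\pare{v\tfrac{\gnn_\ip-\gnn_\im}{\Dx}}$ against $\roinn$ and $v\tfrac{\rho^n_{i+1}-\roin}{\Dx}$ against $\varepsilon^2\gnn_\ip$ cancel up to the time-level mismatch $(\rho^{n+1}_{i+1}-\roinn)-(\rho^n_{i+1}-\roin)$, which, upon inserting the macro update, telescopes into $\tfrac{2\Dt^2}{\Dx^2}\sum_i|\Pi(v(\gnn_\ipp-\gnn_\ip))|^2\leq\tfrac{8\Dt^2}{\Dx^2}\sum_i\Pi((\gnn_\ip)^2)$. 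Collecting (i)--(iii) with the remaining lower-order terms and using Young's inequality with $\varepsilon$- and $\Dx$-dependent weights, one checks that the total of the non-dissipative positive contributions is dominated by $4s_m\sigma_m\Dt\sum_i\Pi((\gnn_\ip)^2)+C\Dt\,\mathcal{E}_n$ exactly when $\Dt$ satisfies \eqref{CFL}, which closes the one-step bound and the induction.

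The main obstacle is this last balance. The scheme carries a single dissipative mechanism, the implicit collision term, acting in the energy norm as a damping of size $O(\Dt/\varepsilon^2)$, and it must simultaneously dominate the macro--micro coupling mismatch (size $\Dt^2/\Dx^2$ per unit of $\sum_i\Pi((\gnn_\ip)^2)$, hence the parabolic requirement $\Dt\lesssim s_m\sigma_m\Dx^2$) and the cross terms generated by the explicit $O(1/\varepsilon)$ upwind transport (which, via the nonnegativity of $\varepsilon^2\|\gnn-\gn\|^2$, impose the hyperbolic requirement $\Dt\lesssim\varepsilon\Dx$); the bookkeeping of these terms with the right $\varepsilon$- and $\Dx$-weights is exactly what produces the two-scale condition \eqref{CFL}, whose sum structure — parabolic plus hyperbolic — is what makes a uniform-in-$\varepsilon$ estimate possible. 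By comparison, the stochastic layer is routine once the affine-in-$\xi^{n+1}$ structure is exploited: the martingale drops under conditioning, the variances are controlled through \eqref{trace}, and the contraction property of the implicit collision solve furnishes the extra factor of $\varepsilon$ that keeps the noise-induced terms of order $\Dt\,\mathcal{E}_n$.
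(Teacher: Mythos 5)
Your proposal is correct, and its deterministic skeleton --- testing the macro and micro updates against $\roinn$ and (weighted by $\varepsilon^2$) against $\gnn_\ip$, using \eqref{bklp} for the collision dissipation, keeping the good sign of the upwind numerical diffusion, and reducing the two coupling terms to a time-level mismatch of size $\Dt^2/\Dx^2$ per unit of $\vertiii{\gnn}^2$ that the dissipation must absorb --- is exactly the paper's (i.e.\ Lemou--Mieussens run under $\E\croch{\,\cdot\,|\fn}$), including the way the parabolic and hyperbolic halves of \eqref{CFL} emerge. Where you genuinely diverge is the stochastic layer. The paper never inverts the implicit collision operator: it expands $\|\rho^{n+1}\|^2$ and $\vertiii{g^{n+1}}^2$ with $a(a-b)=\tfrac12(a^2-b^2+|a-b|^2)$, computes the conditional expectation of every cross term, and then eliminates the resulting $\rho^{n+1}-\rho^n$ and $g^{n+1}-g^n$ contributions by Young's inequality with the specific weights $\alpha=1/(4\Dt)$ and $\alpha=\varepsilon/(2\Dt(1+\varepsilon))$, so that they are absorbed by the $\tfrac12\|\rho^{n+1}-\rho^n\|^2$ and $\tfrac{\varepsilon^2}{2}\vertiii{g^{n+1}-g^n}^2$ terms produced by the same identity. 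You instead write $(\roinn,\gnn)$ as affine functions of $(\xi_k^{n+1})_{k}$ and invoke conditional $L^2$-orthogonality of the Gaussian increments, which cleanly splits the energy into the deterministic part plus a variance term $\sum_{i,k}\bigl((B'_{i,k})^2+\varepsilon^2\Pi(B_{k,\ip}^2)\bigr)$. This is tidier, but it obliges you to estimate the coefficient $B'_{i,k}$ of $\xi_k^{n+1}$ inside $\roinn$, which contains the discrete flux of the noise part of $\gnn$ and hence a factor $\Dt^{3/2}/\Dx$ times the resolvent $(I-\sigma_\ip\Dt\varepsilon^{-2}\lo)^{-1}$ applied to $b_{i+\frac12,k}\,\gn_\ip$; showing that this contributes $O(\Dt)\,\mathcal{E}_n$ requires combining the contraction bound $\min\bigl(1,\varepsilon^2/(2 s_m\sigma_m\Dt)\bigr)$ with \eqref{CFL} and distinguishing the regimes $\varepsilon\lesssim\Dx$ and $\varepsilon\gtrsim\Dx$ --- it works, but it is the one step you must write out, since the crude consequence $\Dt/\Dx\leq C$ alone does not close it. You also use implicitly that $\Pi(\gn_\ip)=0$ for all $n$ ($\mathbb{P}$-a.s.), which the paper establishes by the induction \eqref{rec_gn} and which you should state, both to justify applying \eqref{bklp} and the resolvent contraction to $\gnn$ and to discard the $\Pi$-part of the upwind transport term. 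Both routes yield the same one-step bound $\E\croch{\mathcal{E}_{n+1}|\fn}\leq(1+L\Dt)\mathcal{E}_n$ and hence $C(T)=e^{LT}$.
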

 
 \subsection{Notations and basic properties}
 
 We adopt the same notations as in \cite{LemouMieussens}. We denote by $M$ the number of points of the grid associated with the discrete positions $x_i$ and $J:=\{1,  \dots, M\}$. For every grid function $\mu = (\mu_i)_{i \in J}$, we define
 \begin{equation}
 \| \mu \|^2 =  \sum_{i } \mu_i^2 \Dx.
 \end{equation}
 For every velocity dependent grid function $v \in [-1,1] \mapsto \phi(v) = \pare{\phi_\ip(v)}_{i \in J}$, we define 
 \begin{equation}
\vertiii{ \phi }^2 =  \sum_{i } \Pi  \pare{ \phi_\ip^2}  \Dx.
 \end{equation}
 If $\phi$ and $\psi$ are  two velocity dependent grid functions, we define their inner product
 \begin{equation}
 \langle \phi,\psi \rangle = \sum_i \Pi  \pare{\phi_\ip \psi_\ip} \Dx. 
 \end{equation}
 We also give some notations for the finite difference operators which are used in scheme \eqref{scheme}. For every grid function $\phi =(\phi_\ip)_{i \in J}$, we define the following one-sided operators:
 \begin{equation}
 D^- \phi_\ip = \frac{\phi_\ip - \phi_\im}{\Dx} ~\text{and }~ D^+ \phi_\ip = \frac{\phi_\ipp - \phi_\ip}{\Dx} 
\end{equation}  
and the following centered operators:
  \begin{equation}
 D^c \phi_\ip = \frac{\phi_\ipp - \phi_\im}{\Dx} ~\text{and }~ D^0 \phi_i = \frac{\phi_\ip - \phi_\im}{\Dx} \pare{= D^- \phi_\ip}.
\end{equation}  
Finally, for every grid function $\mu = (\mu_i)_{i \in J}$, we define the following centered operator:
\begin{equation}
\delta^0 \mu_\ip = \frac{\mu_{i+1} - \mu_i}{\Dx}.
\end{equation}
Let us recall here some results about these operators whose proofs can be found in \cite{LiuMieussens}.
\begin{lemma}
For every grid function $\phi =(\phi_\ip)_{i \in J}$, $\psi =(\psi_\ip)_{i \in J}$ and   $\mu = (\mu_i)_{i \in J}$, we have
\begin{enumerate}
\item[$\bullet$] Centered form of the upwind operator:
\begin{equation}
\label{centerd_form}
(v^+D^- + v^-D^+) \phi_\ip = v D^c \phi_\ip - \frac{\Dx}{2} |v| D^-D^+ \phi_\ip
\end{equation}
\item[$\bullet$]  A priori bound for the discrete derivative:  \begin{equation} 
{\sum_i (D^+ \phi_\ip)^2 \Dx \leq \frac{4}{\Dx^2} \sum_i \phi_\ip^2 \Dx}
\end{equation}
\item[$\bullet$] Discrete integration by parts: 
\begin{equation}
\label{int_part1}
\sum_i \mu_i D^0 \phi_i \Dx = - \sum_i (\delta^0 \mu_\ip) \phi_\ip \Dx ~~~~~~~~~~ 
\end{equation}
\begin{equation}
\label{int_part2}
\sum_i \psi_\ip D^- \phi_\ip \Dx = - \sum_i (D^+ \psi_\ip) \phi_\ip \Dx
\end{equation}
\begin{equation}
\label{int_part3}
\sum_i \phi_\ip D^c \phi_\ip \Dx = 0~~~~~~~~~~~~~~~  ~~~~~~~~~~~~~
\end{equation}
\item[$\bullet$] Estimate for the adjoint upwind operator : for every positive real number $\alpha$ and for $\phi$ and $\psi$ being velocity dependent
\begin{equation}
\label{ineg_upwind}
|\langle (v^+D^+ + v^- D^-)\psi, \phi \rangle| \leq \alpha \vertiii{ \phi } + \frac{1}{4\alpha}  \vertiii{ \, |v| D^+ \psi \, }^2.
\end{equation}
\end{enumerate}
\end{lemma}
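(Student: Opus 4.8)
The plan is to establish the five statements one at a time; each is an elementary fact of discrete calculus on the periodic grid, and collectively they rest on only a handful of tools: pointwise algebra in the velocity variable, the elementary inequality $(a-b)^2 \leq 2a^2 + 2b^2$, discrete summation by parts (which produces no boundary term because the grids are extended by periodicity), and the Cauchy--Schwarz and Young inequalities. I would treat them in the order stated.

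For the centered form \eqref{centerd_form} I would expand both sides as linear combinations of the three nodal values $\phi_\ipp$, $\phi_\ip$, $\phi_\im$ and match coefficients. Using the scalar identities $v = v^+ + v^-$ and $\va{v} = v^+ - v^-$, together with the direct computation $D^- D^+ \phi_\ip = \pare{\phi_\ipp - 2\phi_\ip + \phi_\im}/\Dx^2$, the verification reduces to checking the coefficient of each of these three nodal values; no summation is involved. For the a priori bound on the discrete derivative I would bound pointwise $(D^+ \phi_\ip)^2 = \pare{\phi_\ipp - \phi_\ip}^2/\Dx^2 \leq 2\pare{\phi_\ipp^2 + \phi_\ip^2}/\Dx^2$, sum over $i$, and use the periodic reindexing $\sum_i \phi_\ipp^2 = \sum_i \phi_\ip^2$ to get $\sum_i (D^+\phi_\ip)^2 \leq (4/\Dx^2)\sum_i \phi_\ip^2$; multiplying by $\Dx$ gives the claim.

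The three integration-by-parts identities all follow from the same mechanism: shift the summation index in exactly one of the two sums that appear after expansion, which is legitimate since periodicity removes any boundary contribution. For \eqref{int_part1}, writing $\sum_i \mu_i D^0\phi_i \Dx = \sum_i \mu_i\pare{\phi_\ip - \phi_\im}$ and reindexing $\sum_i \mu_i\phi_\im = \sum_i \mu_{i+1}\phi_\ip$ yields $\sum_i\pare{\mu_i - \mu_{i+1}}\phi_\ip = -\sum_i (\delta^0\mu_\ip)\phi_\ip\Dx$. The identity \eqref{int_part2} is obtained in the same way via $\sum_i\psi_\ip\phi_\im = \sum_i\psi_\ipp\phi_\ip$, and \eqref{int_part3} expresses the skew-symmetry of $D^c$: the identical shift gives $\sum_i\phi_\ip\phi_\ipp = \sum_i\phi_\ip\phi_\im$, so $\sum_i\phi_\ip D^c\phi_\ip\Dx$ telescopes to zero.

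The one step requiring a little care --- and the main, though still elementary, obstacle --- is the adjoint-upwind estimate \eqref{ineg_upwind}. The crux is the norm identity $\vertiii{(v^+D^+ + v^-D^-)\psi} = \vertiii{\va{v}D^+\psi}$. To prove it I would first use $v^+ v^- = 0$ for every $v$, so that pointwise in the velocity the cross term vanishes and $\pare{v^+ D^+\psi_\ip + v^- D^-\psi_\ip}^2 = (v^+)^2(D^+\psi_\ip)^2 + (v^-)^2(D^-\psi_\ip)^2$. I would then interchange the velocity average $\Pi$ with the spatial sum $\sum_i$ (they act on independent variables) and apply the periodic reindexing $\sum_i (D^-\psi_\ip)^2 = \sum_i(D^+\psi_\ip)^2$ at each fixed $v$; since $(v^+)^2 + (v^-)^2 = v^2 = \va{v}^2$, the two contributions recombine into $\sum_i \Pi\pare{\va{v}^2(D^+\psi_\ip)^2}\Dx = \vertiii{\va{v}D^+\psi}^2$. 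With this identity in hand, Cauchy--Schwarz for the inner product $\langle\cdot,\cdot\rangle$ gives $\va{\langle(v^+D^+ + v^-D^-)\psi,\phi\rangle} \leq \vertiii{\va{v}D^+\psi}\,\vertiii{\phi}$, and Young's inequality $ab \leq \alpha a^2 + \frac{1}{4\alpha}b^2$ with $a = \vertiii{\phi}$ and $b = \vertiii{\va{v}D^+\psi}$ yields the stated bound. The genuine subtlety is that the $D^-$ in the adjoint operator must be converted into a $D^+$ before it can be compared with the right-hand side, which is precisely what the combination of $v^+v^- = 0$ and the periodic shift accomplishes.
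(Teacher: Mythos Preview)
Your proof is correct. The paper itself does not prove this lemma; it simply states the identities and refers the reader to \cite{LiuMieussens} for the arguments, so there is no approach to compare against. What you have written is precisely the standard derivation: pointwise algebra with $v=v^++v^-$ and $\va{v}=v^+-v^-$ for \eqref{centerd_form}, the bound $(a-b)^2\le 2a^2+2b^2$ and periodic reindexing for the a~priori estimate, index shifts for the three summation-by-parts formulas, and the key observation $v^+v^-=0$ combined with the periodic shift $\sum_i(D^-\psi_\ip)^2=\sum_i(D^+\psi_\ip)^2$ to obtain the norm identity $\vertiii{(v^+D^++v^-D^-)\psi}=\vertiii{\,\va{v}D^+\psi\,}$ before applying Cauchy--Schwarz and Young.

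One small remark: your Young step actually produces $\alpha\,\vertiii{\phi}^2+\tfrac{1}{4\alpha}\vertiii{\,\va{v}D^+\psi\,}^2$, with $\vertiii{\phi}$ squared. The statement as printed omits that exponent, but your version is the correct one and is in fact what the paper uses when it invokes \eqref{ineg_upwind} to bound $|B|$ in the energy-estimate section.
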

Finally, the operator $\Pi$ satisfies also the following property:
\begin{lemma}
If $g \in L^2([-1,1])$, then
\begin{equation}
\pare{\Pi  \pare{ v g}}^2 \leq \frac{1}{2} \Pi  \pare{|v| g^2}.
\end{equation}
\end{lemma}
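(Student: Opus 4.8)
The plan is to unwind the definition of $\Pi$ and reduce the statement to a single weighted Cauchy--Schwarz inequality on $[-1,1]$. Recall that $\Pi \phi = \frac{1}{2}\int_{-1}^1 \phi(v)\,dv$, so that $\Pi(vg) = \frac{1}{2}\int_{-1}^1 v\,g(v)\,dv$ and $\Pi(|v|g^2) = \frac{1}{2}\int_{-1}^1 |v|\,g(v)^2\,dv$. Hence the claim is equivalent to
$$
\pare{\int_{-1}^1 v\,g(v)\,dv}^2 \leq \int_{-1}^1 |v|\, g(v)^2\,dv,
$$
after clearing the factors $\frac12$: the left-hand side, being squared, carries $\frac14$, while the right-hand side carries $\frac12\cdot\frac12=\frac14$, so the two factors cancel and leave exactly the inequality displayed above.

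The key step is to split the integrand $vg$ so that the weight $|v|$ is shared symmetrically between the two factors. First I would write, for a.e. $v$, the factorization $v\,g(v) = \mathrm{sgn}(v)\,|v|^{1/2}\cdot |v|^{1/2} g(v)$, and then apply the Cauchy--Schwarz inequality in $L^2([-1,1])$ to the pair $\mathrm{sgn}(v)\,|v|^{1/2}$ and $|v|^{1/2}g(v)$. Using $(\mathrm{sgn}(v))^2 = 1$ almost everywhere, this yields
$$
\pare{\int_{-1}^1 v\,g\,dv}^2 \leq \pare{\int_{-1}^1 |v|\,dv}\pare{\int_{-1}^1 |v|\,g^2\,dv}.
$$
The only remaining arithmetic is the elementary computation $\int_{-1}^1 |v|\,dv = 2\int_0^1 v\,dv = 1$, which is precisely what makes the first factor equal to one and produces the stated constant $\frac12$ once the factors of $\Pi$ are restored.

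There is essentially no obstacle here, since the result is a one-line Cauchy--Schwarz estimate. The only points requiring care are the bookkeeping of the two factors $\frac12$ coming from the two copies of $\Pi$ together with the squaring on the left, and the choice of the Cauchy--Schwarz splitting, namely placing half a power of $|v|$ on each side so that exactly the weight $|v|$ reappears under both integrals. It is worth noting that the constant is sharp: equality holds when $|v|^{1/2}g(v)$ is proportional to $\mathrm{sgn}(v)\,|v|^{1/2}$, that is, $g(v) = c\,\mathrm{sgn}(v)$, so $\frac12$ cannot be improved.
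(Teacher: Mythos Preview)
Your argument is correct: unwinding the two factors of $\tfrac12$ from $\Pi$ reduces the claim to $\bigl(\int_{-1}^1 v g\,dv\bigr)^2 \leq \int_{-1}^1 |v|\,g^2\,dv$, and your weighted Cauchy--Schwarz splitting $vg = \mathrm{sgn}(v)\,|v|^{1/2}\cdot |v|^{1/2}g$ together with $\int_{-1}^1 |v|\,dv = 1$ gives exactly this. The paper does not supply a proof of this lemma (it is stated as a recalled fact, in the same spirit as the preceding lemma whose proofs are referred to \cite{LiuMieussens}), so there is nothing to compare against; your proof is the natural one and would be the expected justification.
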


\subsection{Energy estimates}
Using the notations introduced in the previous section, the scheme \eqref{scheme} can be written as 
 \begin{subequations}
   \label{scheme2}
\begin{gather} 
  \dsp{\roinn = - \Dt D^0 \Pi  \pare{v \gnn_i }+ \roin \pare{1 + \frac{1}{2} \Delta t \sum_{k \geq 0} (b_{ik})^2  + \sqrt{\Delta t} \sum_{k \geq 0} b_{ik} \xi_k^{n+1}} } ~~~~~~~~~~~~~~~~~~~~~~~~~~  ~~~~~~~~~
  \label{scheme2_a}\\
  \begin{array}{l}
  \dsp{\gnn_\ip = - \frac{\Dt}{\varepsilon \Dx} (I - \Pi) \pare{v^+ D^- + v^- D^+} \gn_\ip} \\
  \\
  ~~~~~~~~~~~~~~~~~~~~~~~~~\dsp{ - \frac{\sigma_\ip}{\varepsilon^2} \lo g^{n+1}_\ip \Dt + \gn_\ip \pare{1+\frac{1}{2} \Delta t \sum_{k \geq 0} (b_{i+\frac{1}{2},k})^2  + \sqrt{\Delta t} \sum_{k \geq 0} b_{i+\frac{1}{2},k} \xi_k^{n+1}} }\\
\dsp{  ~~~~~~~~~~~~~~~~~~~~~~~~~  ~~~~~~~~~~~~~~~~~~~~~~~~~  - \frac{1}{\varepsilon^2} v  \delta^0 \rho_\ip^n \Dt}.
 \end{array} \label{scheme2_b}
 \end{gather} 
 \end{subequations} 
 The energy of the system \eqref{micromacrostrato} being defined as $\dsp{\int \rho^2 dx + \varepsilon^2 \int \Pi  \pare{ g^2 } dx}$, similarly to the telegraph equation case, it is clear that the scheme can be proved to be stable if the discrete energy at time $n+1$ can be controlled by the discrete  energy at time $n$.
\\
 
 Therefore, we multiply \eqref{scheme2_a} by $\roinn$ and we take the sum over $i$. Thus, using the standard equality $a (a-b) = \frac{1}{2}(a^2 - b^2 + |a-b|^2)$, we obtain
 
 \begin{multline}
 \label{eq_rho_no_esp}
 \frac{1}{2} \left(\| \rho^{n+1}\|^2 - \left\| \pare{1 + \frac{1}{2} \Delta t \sum_{k \geq 0} (b_{\bullet k})^2  + \sqrt{\Delta t} \sum_{k \geq 0} b_{ \bullet k} \xi_k^{n+1}} \rho^n\right\|^2 \right. \\  \left.+ \left\| \rho^{n+1} - \pare{1 + \frac{1}{2} \Delta t \sum_{k \geq 0} (b_{\bullet k})^2  + \sqrt{\Delta t} \sum_{k \geq 0} b_{ \bullet k} \xi_k^{n+1}} \rho^n\right\|^2 \right) \\
+  \sum_i \roinn D^0 \Pi  \pare{ v g_i^{n+1}} \Dx \Dt =0
 \end{multline}
 denoting $\dsp{\sum_k b_{\bullet k} := (\sum_k b_{ik})_{i\in J}}$.
 Similarly to the proof of Theorem \ref{th_stability1}, we  want to take the conditional expectation $\E \croch{~\cdot~ | \mathcal{F}_n}$. We applied it on \eqref{eq_rho_no_esp}. The second term of the left-hand side becomes  
\begin{multline*}
\dsp{\E \croch{\left\| \pare{1+ \frac{1}{2} \Delta t \sum_{k \geq 0} (b_{\bullet k})^2  + \sqrt{\Delta t} \sum_{k \geq 0} b_{ \bullet k} \xi_k^{n+1}} \rho^n\right\|^2 | \fn}} \\ 
= \dsp{\E \left[\sum_i \pare{1+ \frac{1}{2} \Delta t \sum_{k \geq 0} b_{ik}^2  + \sqrt{\Delta t} \sum_{k \geq 0} b_{ik} \xi_k^{n+1}}^2 \pare{\roin}^2 \Dx | \fn \right]}
\end{multline*}
which is equal to 
\begin{multline*}
   \dsp{\E \left[\sum_i  \left(1 + \frac{1}{4} \Dt^2 \pare{\sum_{k \geq 0} b_{ik}^2}^2 + \Dt  \pare{\sum_{k \geq 0} b_{ik}}^2 (\xin_k)^2 + \Dt \pare{\sum_{k \geq 0} b_{ik}^2}\right. \right.} \\
\dsp{\left. \left.   + 2 \sqrt{\Dt} \pare{\sum_{k \geq 0} b_{ik} \xin_k} + \Dt^{3/2} \pare{\sum_{k \geq 0} b_{ik}^2} \pare{\sum_{k \geq 0} b_{ik} \xin_k} \right) \pare{\roin}^2 \Dx | \fn \right]}. 
 \end{multline*} 
This term can be written
\begin{multline*}
     \dsp{\sum_i  \pare{\roin}^2 \Dx  \E \left[\left(1 + \frac{1}{4} \Dt^2 \pare{\sum_{k \geq 0} b_{ik}^2}^2 + \Dt  \pare{\sum_{k \geq 0} b_{ik}}^2 (\xin_k)^2 \right.  \right.} \\
\dsp{\left.  \left. + \Dt \pare{\sum_{k \geq 0} b_{ik}^2}  + 2 \sqrt{\Dt} \pare{\sum_{k \geq 0} b_{ik} \xin_k} + \Dt^{3/2} \pare{\sum_{k \geq 0} b_{ik}^2} \pare{\sum_{k \geq 0} b_{ik} \xin_k} \right)  | \fn \right] }
\end{multline*}
which yields
\begin{multline*}
      \dsp{\sum_i  \pare{\roin}^2 \Dx  \pare{1 + \frac{1}{4} \Dt^2 \pare{\sum_{k \geq 0} b_{ik}^2}^2 + \Dt  \pare{\sum_{k \geq 0} b_{ik}}^2    + \Dt \pare{\sum_{k \geq 0} b_{ik}^2}}}\\
=  \dsp{\| \rho^n\|^2 + \Dt  \pare{\left\| \pare{\sum_{k \geq 0} b_{\bullet k}} \rho^n\right\|^2 + \left\| \pare{\sqrt{\sum_{k \geq 0} b_{\bullet k}^2}} \rho^n\right\|^2} + \frac{\Dt^2}{4} \left\| \pare{\sum_{k \geq 0} b_{\bullet k}^2} \rho^n\right\|^2}
\end{multline*} 
using the fact that $\rho^n$ is $\fn$-measurable and for all $k$, $\xin_k$ is independent of $\fn$ and the properties of the conditional expectation. Furthermore,  similarly, the third term of the left-hand side of \eqref{eq_rho_no_esp} becomes
\begin{multline*}
\dsp{\E \croch{\left\| \rho^{n+1} - \pare{1 + \frac{1}{2} \Delta t \sum_{k \geq 0} (b_{\bullet k})^2  + \sqrt{\Delta t} \sum_{k \geq 0} b_{ \bullet k} \xi_k^{n+1}} \rho^n\right\|^2 | \fn}}\\
\dsp{= \E \croch{\sum_i \pare{\roinn -\roin - \pare{\frac{1}{2} \Delta t \sum_{k \geq 0} b_{ik}^2  + \sqrt{\Delta t} \sum_{k \geq 0} b_{ik} \xi_k^{n+1}} \roin }^2 \Dx | \fn}}
\end{multline*}
\begin{multline*}
\dsp{= \E \left[\sum_i \left(\pare{\roinn -\roin}^2 + \pare{\frac{1}{2} \Delta t \sum_{k \geq 0} b_{ik}^2  + \sqrt{\Delta t} \sum_{k \geq 0} b_{ik} \xi_k^{n+1}}^2  (\roin)^2  \right. \right.}\\
\dsp{\left. - 2 \pare{\roinn -\roin}  \pare{\frac{1}{2} \Delta t \sum_{k \geq 0} b_{ik}^2  + \sqrt{\Delta t} \sum_{k \geq 0} b_{ik} \xi_k^{n+1}}   \roin  \Dx | \fn\right]}
\end{multline*}
This term can be written 
\begin{equation*}
\begin{array}{l}
\dsp{ =\E \croch{\left\| \rho^{n+1} -\rho^n\right\|^2 | \fn} } \\
~~~~\dsp{+  \E \croch{ \sum_i \pare{ \frac{1}{4} \Dt^2 \pare{\sum_{k \geq 0} b_{ik}^2 }^2 + \Dt \pare{\sum_{k \geq 0} b_{ik}}^2~(\xin_k)^2 + \Dt^{3/2} \pare{ \sum_{k \geq 0} b_{ik}^2 } \pare{\sum_{k \geq 0} b_{ ik} \xi_k^{n+1}} }  (\roin)^2 \Dx | \fn }}\\
~~~~~~~~~~~~~~~~~~~~~~~  \dsp{ - 2 \E \croch{ \sum_i  \pare{\roinn -\roin} \pare{\frac{1}{2} \Delta t \sum_{k \geq 0} b_{ik}^2  + \sqrt{\Delta t} \sum_{k \geq 0} b_{ik} \xi_k^{n+1}} \roin \Dx  | \fn}}\\
\end{array}
\end{equation*} 
\begin{equation*}
\begin{array}{l}
\dsp{ =\E \croch{\left\| \rho^{n+1} -\rho^n\right\|^2 | \fn} + \Dt   \left\| \pare{\sum_{k \geq 0} b_{\bullet k}} \rho^n\right\|^2  + \frac{\Dt^2}{4} \left\| \pare{\sum_{k \geq 0} b_{\bullet k}^2} \rho^n\right\|^2  } \\
~~~~~~~~~~~~~~~~~~~~~~~  \dsp{ - 2 \E \croch{ \sum_i  \pare{\roinn -\roin} \pare{\frac{1}{2} \Delta t \sum_{k \geq 0} b_{ik}^2  + \sqrt{\Delta t} \sum_{k \geq 0} b_{ik} \xi_k^{n+1}}  \roin \Dx  | \fn}}. 
\end{array}
\end{equation*} 
Thus, we have 
  \begin{multline}
     \label{expr_rho}
 \frac{1}{2} \pare{ \E \croch{\| \rho^{n+1}\|^2 | \fn}  -  \pare{\| \rho^n\|^2 + \Dt \left\| \pare{\sqrt{\sum_{k \geq 0} b_{\bullet k}^2}} \rho^n\right\|^2} + \E \croch{\left\| \rho^{n+1} -\rho^n\right\|^2 | \fn}}    \\
+  \E \croch{\sum_i \roinn D^0 \Pi  \pare{ v g_i^{n+1} } \Dx \Dt | \fn} \\
 -  \E \croch{ \sum_i  \pare{\roinn -\roin} \pare{\frac{1}{2} \Delta t \sum_{k \geq 0} b_{ik}^2  + \sqrt{\Delta t} \sum_{k \geq 0} b_{ik} \xi_k^{n+1}}  \roin \Dx  | \fn} =0.
 \end{multline}
 We do the same for $\gn$ multiplying \eqref{scheme2_b} by $\gn_\ip$, taking the velocity average and summing over $i$, we obtain
  \begin{multline*}
 \frac{1}{2} \left( \vertiii{ g^{n+1}}^2 - \vertiii{ \pare{1 +  \frac{1}{2} \Delta t \sum_{k \geq 0} (b_{\bullet k})^2  + \sqrt{\Delta t} \sum_{k \geq 0} b_{ \bullet k} \xi_k^{n+1}} g^n}^2  \right.\\ \left. + \vertiii{  g^{n+1} - \pare{1 +  \frac{1}{2} \Delta t \sum_{k \geq 0} (b_{\bullet k})^2  + \sqrt{\Delta t} \sum_{k \geq 0} b_{ \bullet k} \xi_k^{n+1}} g^n}^2\right) \\
+ \frac{\Dt}{\varepsilon} \langle \gnn, (I- \Pi) (v^+D^-+v^-D^+) \gn\rangle \\
= \frac{\Dt}{\varepsilon^2} \langle \gnn, \sigma \lo \gnn \rangle - \frac{\Dt}{\varepsilon^2} \sum_i \Pi  \pare{ v \gnn_\ip }\delta^0 \rho^n_\ip \Dx.
 \end{multline*}
 
 Again, we take the conditional expectation and we obtain an expression similar as the one obtained for $\rho^n$, 
   \begin{multline}
   \label{expr_g}
 \frac{1}{2} \pare{ \E \croch{\vertiii{ g^{n+1}}^2 | \fn}  -  \pare{\vertiii{ g^n}^2 + \Dt \vertiii{ \pare{\sqrt{\sum_{k \geq 0} b_{\bullet k}^2}} g^n}^2} + \E \croch{\vertiii{g^{n+1} - g^n}^2 | \fn }} \\
  -  \E \croch{ \sum_i  \Pi  \pare{ \pare{\gnn_\ip -\gn_\ip} \pare{\frac{1}{2} \Delta t \sum_{k \geq 0} b_{i+\frac{1}{2},k}^2  + \sqrt{\Delta t} \sum_{k \geq 0} b_{i+\frac{1}{2}, k} \xi_k^{n+1}}  \gn_\ip } \Dx  | \fn}\\
  +\E \croch{ \frac{\Dt}{\varepsilon}  \langle\gnn, (I- \Pi) (v^+D^-+v^-D^+) \gn  \rangle | \fn} \\
=  \E \croch{\frac{\Dt}{\varepsilon^2}  \langle\gnn, \sigma \lo \gnn  \rangle | \fn }-  \E \croch{\frac{\Dt}{\varepsilon^2} \sum_i \Pi  \pare{ v \gnn_\ip }\delta^0 \rho^n_\ip \Dx| \fn }.
 \end{multline}
 First, we notice that the fifth term of the left-and side of \eqref{expr_g} can be rewritten 
 \begin{equation*}
 \begin{array}{l}
   \dsp{\E \croch{ \frac{\Dt}{\varepsilon}  \langle \gnn, (I- \Pi) (v^+D^-+v^-D^+) \gn  \rangle | \fn} }\\
   \dsp{=   \E \croch{ \frac{\Dt}{\varepsilon}  \langle\gnn,  (v^+D^-+v^-D^+) \gn  \rangle | \fn} -   \E \croch{ \frac{\Dt}{\varepsilon} \sum_i \Pi  \pare{\gnn_\ip } \Pi  \pare{ (v^+D^-+v^-D^+) \gn_\ip } | \fn}}\\
    \dsp{=   \E \croch{ \frac{\Dt}{\varepsilon}  \langle \gnn,  (v^+D^-+v^-D^+) \gn  \rangle | \fn} -   \E \croch{ \frac{\Dt}{\varepsilon} \sum_i \Pi  \pare{\gnn_\ip }   | \fn}  \Pi  \pare{ (v^+D^-+v^-D^+) \gn_\ip }}
 \end{array}
 \end{equation*}
 Since the initial data satisfy $\Pi  \pare{g_\ip^0}= 0$ for every $i$ (see \eqref{def_f_g}), we can prove by induction that for all $n$,  $\mathbb{P}$-a.s. we have $\Pi  \pare{g_\ip^{n+1}}= 0$. Indeed, applying the average operator $\Pi$ to \eqref{scheme2_b} and using that $\Pi  \pare{  I -  \Pi }= 0$, $\Pi  \lo  =0$ and $\Pi  \pare{v }=0$ yields
 \begin{equation}
   \label{rec_gn}
\Pi  \pare{ g_\ip^{n+1}} = \pare{1 +\frac{1}{2} \Delta t \sum_{k \geq 0} b_{i+\frac{1}{2},k}^2  + \sqrt{\Delta t} \sum_{k \geq 0} b_{i+\frac{1}{2}, k} \xi_k^{n+1}} \Pi  \pare{g_\ip^{n}}.
 \end{equation}
 Therefore, 
the fifth term of the left-hand side of \eqref{expr_g} becomes 
  \begin{equation}
  \label{ineg_ps}
 \begin{array}{l}
    \dsp{  \E \croch{ \frac{\Dt}{\varepsilon}  \langle\gnn,  (v^+D^-+v^-D^+) \gn  \rangle | \fn} }
 \end{array}
 \end{equation}
 Furthermore, using the assumptions on $\sigma$ and the operator $\lo$ and the properties of the conditional expectation, we have
 \begin{equation}
 \label{ineg_lo}
 \E \croch{ \frac{\Dt}{\varepsilon^2}   \langle \gnn, \sigma \lo \gnn  \rangle   | \fn} \leq - \underbrace{2 s_m \sigma_m}_{=: \widetilde{\sigma}} \E \croch{\|| \gnn\||^2 | \fn} \Dt. 
 \end{equation}
 Thus, we add up \eqref{expr_rho}  and $\varepsilon^2$ $\times$ \eqref{expr_g} and we use \eqref{ineg_ps} and \eqref{ineg_lo} and the discrete integration by parts \eqref{int_part1}. We obtain 
  \begin{multline}
  \label{ineg_inter}
 \frac{1}{2} \pare{ \E \croch{\| \rho^{n+1}\|^2 | \fn}  -  \pare{\| \rho^n\|^2 + \Dt \left\| \pare{\sqrt{\sum_{k \geq 0} b_{\bullet k}^2}} \rho^n\right\|^2}}    +  \E \croch{\sum_i \roinn D^0 \Pi  \pare{ v g_i^{n+1} }\Dx \Dt | \fn} \\
 +  \frac{\varepsilon^2}{2} \pare{ \E \croch{\vertiii{ g^{n+1}}^2 | \fn}  -  \pare{\vertiii{ g^n}^2 + \Dt \vertiii{ \pare{\sqrt{\sum_{k \geq 0} b_{\bullet k}^2}} g^n}^2}} +{\varepsilon} \E \croch{ {\Dt}  \langle \gnn, (v^+D^-+v^-D^+) \gn  \rangle | \fn} \\
 \leq  \E \croch{ \sum_i  \pare{\roinn -\roin} \pare{\frac{1}{2} \Delta t \sum_{k \geq 0} b_{ik}^2  + \sqrt{\Delta t} \sum_{k \geq 0} b_{ik} \xi_k^{n+1}}  \roin \Dx  | \fn} ~~~~~~~~~~~~~~~~~~~~~~~~ ~~~~~~~~~~ \\ 
 + \varepsilon^2  \E \croch{ \sum_i  \Pi  \pare{ \pare{\gnn_\ip -\gn_\ip} \pare{ \frac{1}{2} \Delta t \sum_{k \geq 0} b_{i+\frac{1}{2},k}^2  + \sqrt{\Delta t} \sum_{k \geq 0} b_{i+\frac{1}{2}, k} \xi_k^{n+1}} \gn_\ip } \Dx  | \fn} \\
-\widetilde{\sigma} \E \croch{\vertiii{ \gnn}^2 | \fn} \Dt + \E \croch{\Dt \sum_i \Pi  \pare{v D^0 g_i^{n+1} }\roin \Dx  |\fn}.
 \end{multline}
 In the following, we will eliminate $\rho^{n+1}-\rho^{n}$  in \eqref{ineg_inter}.  Noticing that in \eqref{ineg_inter} the term 
 $$ 
 \E \croch{\sum_i \roinn D^0 \Pi  \pare{ v g_i^{n+1} } \Dx \Dt | \fn}
 $$ 
 in the left-hand side can be rewritten as $ \dsp{\E \croch{\sum_i  \Pi  \pare{ v D^0  g_i^{n+1} }  \roinn\Dx \Dt | \fn}}$, it can be coupled with the last term of the right-hand side. We thus want to control the term    $$ \E \croch{\sum_i \Pi  \pare{v D^0  g_i^{n+1} } (\roin - \roinn)\Dx \Dt | \fn}.$$ Using the Young inequality and the properties of the conditional expectation, we get for all $\alpha > 0$, 
 \begin{multline*}
 \left| \E \croch{\sum_i  \Pi  \pare{v D^0  g_i^{n+1} }  (\roin - \roinn)\Dx \Dt | \fn}  \right| \\
 \leq \alpha \E \croch{\| \rho^{n+1} -  \rho^n\|^2 \Dt | \fn} + \frac{1}{4 \alpha} \E \croch{\sum_i  \pare{\Pi  \pare{v D^0  g_i^{n+1} }}^2 \Dx \Dt | \fn} 
 \end{multline*}
 Quite similarly, we have
 \begin{equation*}
 \begin{array}{l}
\left| \dsp{\E \croch{ \sum_i  \pare{\roinn -\roin} \pare{\frac{1}{2} \Delta t \sum_{k \geq 0} b_{ik}^2  + \sqrt{\Delta t} \sum_{k \geq 0} b_{ik} \xi_k^{n+1}}  \roin \Dx  | \fn}} \right|\\
\dsp{\leq \alpha   \E \croch{\| \rho^{n+1} -  \rho^n\|^2 \Dt | \fn} + \frac{1}{4 \alpha}  \E \croch{ \left\| \pare{\frac{1}{2} \sqrt{\Delta t} \sum_{k \geq 0} b_{\bullet k}^2  + \sum_{k \geq 0} b_{\bullet k} \xi_k^{n+1}}  \rho^n \right\|^2  | \fn}}\\
 = \dsp{\alpha   \E \croch{\| \rho^{n+1} -  \rho^n\|^2 \Dt | \fn} + \frac{1}{4 \alpha} \pare{ \frac{1}{4} {\Dt} \left\| \sum_{k \geq 0} b_{\bullet k}^2 \rho^n\right\|^2 + \left\| \sum_{k \geq 0} b_{\bullet k} \rho^n\right\|^2}.   }
 \end{array}
 \end{equation*}
 Thus, $\rho^{n+1}-\rho^{n}$ terms cancel out in \eqref{ineg_inter} if $\alpha = \frac{1}{4 \Dt}$  and \eqref{ineg_inter} becomes 
  \begin{equation}
  \begin{array}{l}
  \label{ineg_inter2}
\dsp{ \frac{1}{2}   \E \croch{\| \rho^{n+1}\|^2 | \fn}}\\
 \dsp{ -\frac{1}{2}   \left( \| \rho^n\|^2 + \Dt \pare{\left\| \pare{\sqrt{\sum_{k \geq 0} b_{\bullet k}^2}} \rho^n\right\|^2  + 2 \left\| \pare{\sum_{k \geq 0} b_{\bullet k}} \rho^n\right\|^2}  + \frac{\Dt^2}{2} \left\| \pare{\sum_{k \geq 0} b_{\bullet k}^2} \rho^n\right\|^2 \right) }
  \\
\dsp{ +  \frac{\varepsilon^2}{2} \pare{ \E \croch{\|| g^{n+1}\||^2 | \fn}  -  \pare{\vertiii{ g^n}^2 + \Dt \vertiii{ \pare{\sqrt{\sum_{k \geq 0} b_{\bullet k}^2}} g^n}^2}}}\\
~\\
\dsp{ +{\varepsilon} \E \croch{ {\Dt}  \langle \gnn, (v^+D^-+v^-D^+) \gn  \rangle | \fn}} \\
~\\

 \leq   \varepsilon^2  \E \croch{ \sum_i  \Pi  \pare{ \pare{\gnn_\ip -\gn_\ip} \pare{ \frac{1}{2} \Delta t \sum_{k \geq 0} b_{i+\frac{1}{2},k}^2  + \sqrt{\Delta t} \sum_{k \geq 0} b_{i+\frac{1}{2}, k} \xi_k^{n+1}} \gn_\ip } \Dx  | \fn} \\
-\widetilde{\sigma} \E \croch{\vertiii{ \gnn}^2 | \fn} \Dt + \E \croch{\Dt \sum_i \Pi  \pare{v D^0 g_i^{n+1} } \roin \Dx  |\fn}.
  
  \end{array}
 \end{equation}
 Let us now prove that quite similarly, $\gnn - \gn$ can be eliminated. As in the deterministic case, we insert $\gnn$ in the inner product appearing in the left-hand side of \eqref{ineg_inter2} and we obtain 
 \begin{equation*}
 \begin{array}{l}
 \dsp{\E \croch{   \langle \gnn, (v^+D^-+v^-D^+) \gn  \rangle | \fn}}\\
  \dsp{ = \E \croch{   \langle\gnn, (v^+D^-+v^-D^+) \gnn \rangle | \fn} +\E \croch{   \langle \gnn, (v^+D^-+v^-D^+) (\gn-\gnn)  \rangle | \fn}} \\
  =: A+B
 \end{array}
 \end{equation*}
 Thus, using the centered form of the upwind operator \eqref{centerd_form} and the discrete integration by parts \eqref{int_part2} and \eqref{int_part3} , $A$ can be written 
 \begin{equation*}
 \begin{array}{rcl}
 A & =  & \dsp{\E \croch{ \langle \gnn, v D^c  \gnn  \rangle | \fn } -  \E \croch{ \langle \gnn, |v| D^- D^+  \gnn  \rangle | \fn } \frac{\Dx}{2}}\\
  & = & \dsp{\frac{\Dx}{2} \E \croch{   \langle D^+ \gnn, |v| D^+  \gnn  \rangle | \fn }}\\
  & = & \dsp{\frac{\Dx}{2} \E \croch{\sum_i \Pi  \pare{ |v| (D^+ \gnn)^2 } \Dx | \fn }}. 
 \end{array}
 \end{equation*}
 Using the discrete integration by parts \eqref{int_part2}, we obtain
 \begin{equation*}
 B = - \E \croch{   \langle(v^+D^+ +v^-D^-) \gnn, \gn-\gnn  \rangle | \fn}
 \end{equation*}
 and 
 \begin{equation*}
|B| \leq  \alpha \E \croch{  \| \gnn- \gn\|^2 | \fn} + \frac{1}{4\alpha} \E \croch{\vertiii{ \,  |v| D^+ \gnn\, }^2 | \fn}
 \end{equation*}
using \eqref{ineg_upwind}. Furthermore, using again the Young inequality, we obtain
\begin{equation*}
\begin{array}{l}
\dsp{ \E \croch{ \sum_i  \Pi  \pare{ \pare{\gnn_\ip -\gn_\ip} \pare{ \frac{1}{2} \Delta t \sum_{k \geq 0} b_{i+\frac{1}{2},k}^2  + \sqrt{\Delta t} \sum_{k \geq 0} b_{i+\frac{1}{2}, k} \xi_k^{n+1}} \gn_\ip } \Dx  | \fn}} \\
 \dsp{\leq \alpha \E \croch{\|| g^{n+1} -g^n\||^2 \Dt | \fn}+ \frac{1}{4\alpha} \pare{ \frac{1}{4} {\Dt} \vertiii{ \sum_{k \geq 0} b_{\bullet k}^2 g^n}^2 + \vertiii{ \sum_{k \geq 0} b_{\bullet k} g^n} ^2} }
\end{array}
\end{equation*}
Thus, for $\dsp{\alpha = \frac{\varepsilon}{2 \Dt (1+ \varepsilon)}}$, \eqref{ineg_inter2} becomes 
  \begin{equation}
  \label{ineg_inter3}
  \begin{array}{l}
 \dsp{\frac{1}{2}  \E \croch{\| \rho^{n+1}\|^2 | \fn}} \\
 \dsp{  -  \frac{1}{2} \pare{\| \rho^n\|^2 + \Dt \pare{\left\| \pare{\sqrt{\sum_{k \geq 0} b_{\bullet k}^2}} \rho^n\right\|^2  + 2 \left\| \pare{\sum_{k \geq 0} b_{\bullet k}} \rho^n\right\|^2} + \frac{\Dt^2}{2} \left\| \pare{\sum_{k \geq 0} b_{\bullet k}^2} \rho^n\right\|^2}  } 
  \\
 \dsp{+  \frac{\varepsilon^2}{2} \E \croch{\vertiii{ g^{n+1}}^2 | \fn}   }
 \\
 \dsp{-  \frac{\varepsilon^2}{2}  \pare{\vertiii{ g^n}^2 + \Dt \vertiii{ \pare{\sqrt{\sum_{k \geq 0} b_{\bullet k}^2}} g^n}^2  + \frac{(1+\varepsilon)}{\varepsilon} \pare{\Dt \vertiii{ \pare{\sum_{k \geq 0} b_{\bullet k}} g^n}^2 + \frac{\Dt^2}{4} \vertiii{ \pare{\sum_{k \geq 0} b_{\bullet k}^2} g^n}^2}}}\\
 \dsp{ +{\varepsilon} \frac{\Dx}{2} \E \croch{\sum_i \Pi  \pare{|v| (D^+ \gnn)^2 } \Dx | \fn } -  \frac{\Dt^2 (1+\varepsilon)}{2}  \E \croch{ \vertiii{\, |v| D^+ \gnn\,}^2 | \fn}} \\
 \leq  
-\widetilde{\sigma} \E \croch{\vertiii{ \gnn}^2 | \fn} \Dt + \E \croch{\Dt \sum_i \Pi  \pare{ v D^0 g_i^{n+1} } \roin \Dx  |\fn}.
  
  \end{array}
 \end{equation}
Similarly to the deterministic case, we can prove that $D^+ \gnn$ and $D^0 \gnn$ are controlled by $\vertiii{ \gnn }$ and we obtain  
 \begin{equation*}
  \begin{array}{l}
 \dsp{\frac{1}{2} \E \croch{\| \rho^{n+1}\|^2 | \fn}}\\
 \dsp{  - \frac{1}{2}   \pare{\| \rho^n\|^2 + \Dt \pare{\left\| \pare{\sqrt{\sum_{k \geq 0} b_{\bullet k}^2}} \rho^n\right\|^2  + 2 \left\| \pare{\sum_{k \geq 0} b_{\bullet k}} \rho^n\right\|^2} + \frac{\Dt^2}{2} \left\| \pare{\sum_{k \geq 0} b_{\bullet k}^2} \rho^n\right\|^2}  } 
  \\
 \dsp{+  \frac{\varepsilon^2}{2} \E \croch{\vertiii{ g^{n+1}}^2 | \fn}   }
 \\
 \dsp{ -   \frac{\varepsilon^2}{2} \pare{\vertiii{ g^n}^2 + \Dt \vertiii{| \pare{\sqrt{\sum_{k \geq 0} b_{\bullet k}^2}} g^n}^2  + \frac{(1+\varepsilon)}{\varepsilon} \pare{\Dt \vertiii{ \pare{\sum_{k \geq 0} b_{\bullet k}} g^n}^2 + \frac{\Dt^2}{4} \vertiii{ \pare{\sum_{k \geq 0} b_{\bullet k}^2} g^n}^2}}} \\
 \leq  
 \dsp{\Dt \pare{\pare{\frac{\Dt (1+\varepsilon)}{2} + \frac{\Dt}{2} - \frac{\varepsilon \Dx}{2}}^+  \frac{4}{\Dx^2} -  \widetilde{\sigma}}\E \croch{\vertiii{ \gnn}^2 | \fn}},
  \end{array}
 \end{equation*}
  see \cite{LiuMieussens} for more details. 
 Thus, using the assumption \eqref{trace}, we see that if $\Dt$ is such that 
\begin{equation}
\label{condition1}
 \pare{\frac{\Dt (1+\varepsilon)}{2} + \frac{\Dt}{2} - \frac{\varepsilon \Dx}{2}}^+  \frac{4}{\Dx^2} \leq  \widetilde{\sigma},
\end{equation} 
we finally obtain the energy estimate
 \begin{multline*}
\E \croch{\| \rho^{n+1}\|^2 | \fn}    
 +  \varepsilon^2 \E \croch{\vertiii{ g^{n+1}}^2 | \fn}  \\
\leq  (1 +  C_1\Dt + C_2 \Dt^2) \| \rho^n\|^2   +  \varepsilon^2 (1 +  \tilde{C}_1\Dt + \tilde{C}_2 \Dt^2) \vertiii{g^n}^2
 \end{multline*}
 for some constants 
$C_1,C_2,\tilde{C}_1,\tilde{C}_2$ depending on the parameter $b_{ik}$ defining the noise. 
 Taking the expectation, and assumming that $\Dt\leq 1$ yields 
$$
  \label{estimation_energy}
\E \croch{\| \rho^{n+1}\|^2}    
 +  \varepsilon^2 \E \croch{\vertiii{ g^{n+1}}^2  }  
\leq  C^{\Dt}   ( \E\croch{ \| \rho^n\|^2}   +  \varepsilon^2  \E \croch{\vertiii{ g^n}^2}),
$$
for some constant $C$, 
 and we obtain the result by induction. 
 Note that a sufficient condition ensuring \eqref{condition1} is 
 \begin{equation}
 \Dt \leq \frac{\widetilde{\sigma} \Dx^2}{2(2+ \varepsilon)} + \frac{\varepsilon \Dx}{2+\varepsilon}
 \end{equation}
 and \eqref{estimation_energy} concludes the proof in the case of an interval of finite time. 
 
 \begin{remark}
 We notice that unlike the deterministic version, the stability is proved here only for finite time interval.
  \end{remark}
 
 \section{Numerical tests\label{Snumerics}}
 We now compare the results obtained with our stochastic AP micro-macro scheme \eqref{scheme} (referred to as SMM) to the results obtained with a standard explicit discretization of the original equation \eqref{eq_diff_sto} written in kinetic variable (for $\varepsilon = 1$) and to the results obtained with the Crank-Nicholson scheme (referred to as CN) for the diffusion limit (for $\varepsilon=10^{-2}$) for the one-group transport equation
  \begin{equation}
d f + \frac{v}{\varepsilon} \partial_x f dt = \frac{1}{\varepsilon} (\Pi f - f) dt  + f \circ Q dW_t.
 \end{equation}
The space domain is $[0,1]$ discretized with $N=200$ points and we use periodic boundary conditions. The initial data is $$ f_0(x,v) = (1 + \cos(2\pi x+\pi)).$$ The time steps for the different scheme are chosen according to each associated CFL condition. We recall that for our scheme, it corresponds to \eqref{CFL}.\\

The noise is taken under the form $$ d\beta_0 + \sum_{k=1}^{N/2} \frac{1}{k+1} \pare{\cos(kx) + \sin(kx)} d \beta_k + \sum^{-1}_{k=-N/2}  \frac{1}{N/2 - (k+N/2-1)} \pare{\cos(kx) + \sin(kx)} d \beta_k$$
where the  $(\beta_k)_{k \in \{-N/2, \dots, N/2\}}$ are independent Brownian motions on the real line. \\

For both the diffusion and the kinetic case, for each time, we perform $100$ realizations. In all the following, the curves are the mean of $\rho$ over the $100$ realizations. \\

We start by showing in Figure \ref{c1} and \ref{c2} a comparison of the mean of $\rho$  for $\varepsilon=1$ between our SMM scheme and the explicit one at $t=0.1$, $t=0.3$, $t=0.6$ and $t=1$.  Figure \ref{c} shows the evolution of the mean of $\rho$ in the same picture. \\
We acknowledge that our scheme describes quite well the solution at any time. \\

In Figure \ref{d1} and \ref{d2}, we compare the mean of $\rho$ in our scheme $\varepsilon =10^{-2}$ with the one in a Crank-Nicholson scheme for the diffustion equation  at $t=\varepsilon/10$,  $t=4 \varepsilon/10$,  $t=0.05$ and $t=0.1$. We give the same conclusion as in the kinetic case above.

 \bibliographystyle{plain}
\bibliography{APStoch}

 \newpage
 \label{numerics}
 \begin{figure}
 \begin{center}
  \includegraphics[scale=0.57]{Cinetique1.eps}
  \includegraphics[scale=0.57]{Cinetique2.eps}
 \end{center}
  \caption{one-group transport equation $\varepsilon = 1$ : comparison between SMM and explicit schemes (200 grid points): $t = 0.1$ (top), $t = 0.3$ (bottom).\label{c1}}
 \end{figure}
 
  \newpage
 \label{numerics}
 \begin{figure}
 \begin{center}
 \includegraphics[scale=0.57]{Cinetique3.eps}
  \includegraphics[scale=0.57]{Cinetique4.eps}
 \end{center}
  \caption{one-group transport equation $\varepsilon = 1$ : comparison between SMM and explicit schemes (200 grid points): $t = 0.6$ (top), $t = 1$ (bottom). \label{c2}}
 \end{figure}

  \newpage
 \label{numerics}
 \begin{figure}
 \begin{center}
 \includegraphics[scale=0.57]{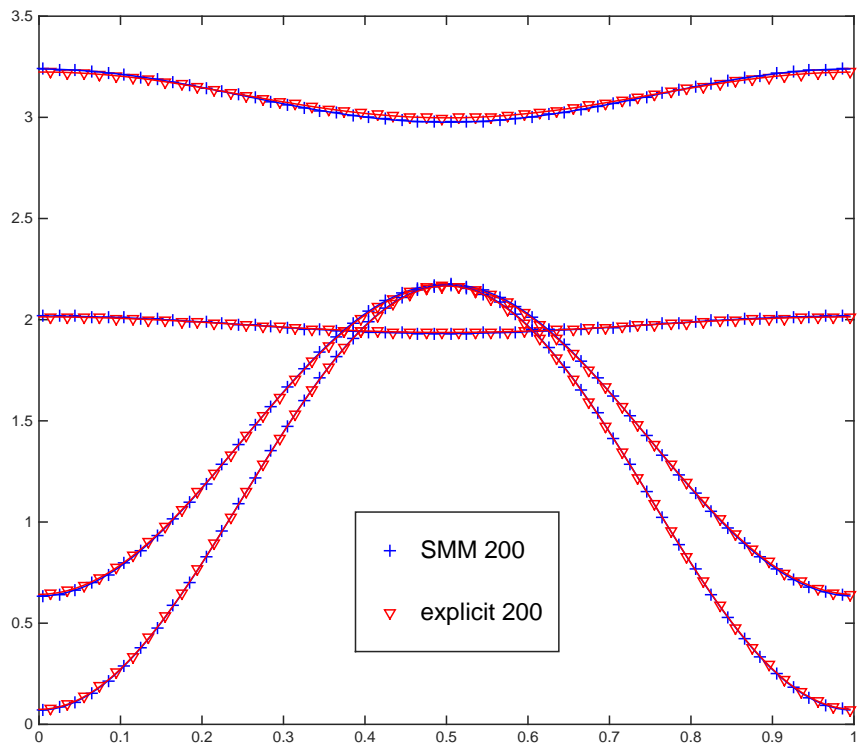}
 \end{center}
  \caption{one-group transport equation $\varepsilon = 1$ : summary of the previous comparisons between SMM and explicit schemes (200 grid points): Results at times t = 0.1, 0.3, 0.6 and 1 (from down to up) .\label{c}}
 \end{figure}
 
 \newpage
 \label{numerics}
 \begin{figure}
 \begin{center}
  \includegraphics[scale=0.57]{Diffusion1.eps}
  \includegraphics[scale=0.57]{Diffusion2.eps}
 \end{center}
  \caption{one-group transport equation $\varepsilon = 10^{-2}$ : comparison between SMM and CN schemes (200 grid points): $t = \varepsilon /10$ (top), $t = 4 \varepsilon /10$ (bottom).\label{d1}}
 \end{figure}
 
  \newpage
 \label{numerics}
 \begin{figure}
 \begin{center}
 \includegraphics[scale=0.57]{Diffusion3.eps}
  \includegraphics[scale=0.57]{Diffusion4.eps}
 \end{center}
  \caption{one-group transport equation $\varepsilon = 10^{-2}$ : comparison between SMM and CN schemes (200 grid points): $t = 0.05$ (top), $t = 0.1$ (bottom).\label{d2}}
 \end{figure}

\end{document}